\documentclass[12pt,a4paper]{article}
\usepackage{eurosym}
\usepackage[utf8]{inputenc}
\usepackage{amsmath,amssymb,amsthm,amsfonts}
\usepackage{mathtools}
\usepackage{bm}
\usepackage{hyperref}
\usepackage{graphicx}
\usepackage{listings}
\usepackage{xcolor}
\usepackage{float}
\usepackage{booktabs}
\usepackage[a4paper,margin=2.5cm]{geometry}

\definecolor{codegreen}{rgb}{0,0.6,0}
\definecolor{codegray}{rgb}{0.5,0.5,0.5}
\definecolor{codepurple}{rgb}{0.58,0,0.82}
\definecolor{backcolour}{rgb}{0.95,0.95,0.92}
\newtheorem{theorem}{Theorem}[section]
\newtheorem{lemma}[theorem]{Lemma}

\newtheorem{proposition}[theorem]{Proposition}

\newtheorem{remark}[theorem]{Remark}
\numberwithin{equation}{section}
\begin{document}

\title{Asymptotic Plateaus for Generalized Abel Equations with Financial
Applications}
\author{Dragos-Patru Covei \\
{\small Department of Applied Mathematics} \\
{\small The Bucharest University of Economic Studies} \\
{\small Piata Romana 1, 010374, Bucharest, Romania} \\
{\small e-mail: \texttt{dragos.covei@csie.ase.ro}}}
\date{\today}
\maketitle

\begin{abstract}
We develop a unified analytical and computational framework for the
generalized Abel ordinary differential equation $y^{\prime }(x)=a_n(x)\bigl(%
y^n+\lambda_{n-1}(x)y^{n-1}+\dots+\lambda_0(x)\bigr)$ of arbitrary degree $%
n\ge1$ on the unbounded interval $[x_0,\infty)$. Under mild structural
hypotheses on the coefficients and on the existence of a stable moving
equilibrium branch $E(x)$, we prove a new \emph{Asymptotic Plateau Theorem}
establishing that the solution issued from $y(x_0)=0$ is globally defined,
strictly monotone, trapped between zero and $E(x)$, and converges to a
finite positive limit $L=\lim_{x\to\infty}E(x)$. We further obtain an
explicit, computable rate of convergence and a degree-reduction principle
that generalizes the classical Liouville substitution. The theory is
complemented by a high-order Radau IIA implementation whose output
reproduces the predicted plateaus to nine significant digits. A detailed
application to a generalized Merton structural credit-risk model derives an
Abel-type equation for the long-maturity state profile of the credit spread
and illustrates the economic relevance of the framework.
\end{abstract}

\medskip\noindent\textbf{Keywords:} Generalized Abel equation; existence and
uniqueness; asymptotic plateau; monotone barriers; Radau IIA scheme;
stochastic production planning; credit spreads.

\medskip\noindent\textbf{Mathematics Subject Classification (2020):} 34A12,
34D05, 34E10, 65L04, 65L20, 91B70, 93E20.

\section{Introduction}

\label{sec:intro}

\subsection{Background and motivation}

The Abel ordinary differential equation of the first kind, 
\begin{equation}
y^{\prime }(x)=a_3(x)y(x)^3+a_2(x)y(x)^2+a_1(x)y(x)+a_0(x),
\label{eq:abel-classic}
\end{equation}
was introduced by Niels Henrik Abel in his pioneering work on the inversion
of elliptic integrals \cite{Abel1881}. Over the following century, the
equation became a central object of nonlinear analysis, with deep
connections to integrability theory \cite{Chini1924,Kamke,Liouville1903},
dynamical systems \cite{Coddington,Ince}, and the qualitative theory of
stiff ordinary differential equations \cite{Hairer}. A landmark contribution
by Cheb-Terrab and Roche \cite{CHEB2003} systematically classified the
integrable subfamilies of \eqref{eq:abel-classic}, while Polyanin and
Zaitsev \cite{PolyaninZaitsev} compiled the most complete catalogue of exact
solutions available to date.

Despite this rich theory, two facts remain conspicuously underdeveloped in
the literature. First, the overwhelming majority of analytical results
concern the cubic case $n=3$; arbitrary polynomial degrees $n\ge 4$ have
received only sporadic treatment, typically in the autonomous setting (see 
\cite{Chini1924} for an early exception). Second, qualitative results
addressing the long-time behaviour of non-autonomous Abel equations on
unbounded intervals are extremely scarce: the few available results are
either local in nature, restrict to integrable subclasses, or rely on
smallness assumptions that exclude the cases that arise in mathematical
finance.

The present paper closes both gaps simultaneously. We consider the
generalized Abel equation 
\begin{equation}
y^{\prime }(x)=\sum_{k=0}^{n}a_k(x)y(x)^k,\qquad a_n(x)\neq 0,\qquad
x\in[x_0,\infty),  \label{eq:abel-gen}
\end{equation}
of arbitrary degree $n\ge1$, and we develop a unified theory of \emph{%
asymptotic plateaus} that subsumes the Riccati case $n=2$, the classical
cubic case $n=3$, and the Chini-type equations as particular instances.

\subsection{Motivation from mathematical finance}

\label{sec:financial-motivation}

Two recent and complementary streams of research motivate the level of
generality adopted here. On one hand, structural credit-risk models
pioneered by Merton \cite{merton1974} and extended by Boyle, Tian and Guan 
\cite{Boyle2002} produce, via affine factor dynamics, a Riccati equation for
the credit spread. The empirically documented \emph{non-vanishing of
long-maturity spreads}, however, cannot be captured by purely Riccati
dynamics. On the other hand, dynamic programming for stochastic
production-planning problems with nonlinear adjustment costs \cite%
{Covei2026A,Covei2026JMA} leads, after a stationary reduction of the
Hamilton--Jacobi--Bellman (HJB) equation, to first-order nonlinear ODEs
whose polynomial degree exceeds two whenever the cost or the drift is
super-quadratic. Both phenomena belong naturally to the framework %
\eqref{eq:abel-gen} with $n\ge3$.

\subsection{Main contributions and novelty}

The principal contributions of this paper are the following.

\begin{enumerate}
\item[(C1)] \textbf{Global existence and uniqueness.} Theorem~\ref%
{thm:existence} establishes the existence of a unique global $C^1$ solution
of \eqref{eq:abel-gen} starting from the data point $y(x_0)=0$, provided
only that the coefficients are continuous and a stable equilibrium branch $%
E\in C^1([x_0,\infty))$ exists. The argument combines the classical
Picard--Lindel\"of theorem with a barrier construction that prevents
finite-time blow-up.

\item[(C2)] \textbf{Asymptotic Plateau Theorem.} Theorem~\ref{thm:plateau}
states that, under the structural assumptions (A1)--(A5) together with the
boundedness of the branch (B1) and the uniform damping (B2), the unique
solution is non-decreasing, trapped in $0\le y(x)\le E(x)$, and converges to
the finite limit $L:=\lim_{x\to\infty}E(x)$. The proof is fully transparent:
every step explicitly invokes the hypothesis on which it relies, and a
direct comparison argument based on the Mean Value Theorem and the strict
negativity of the linearization eigenvalue $\Lambda(x)$ produces the
conclusion without requiring any integrability of $E^{\prime }$.

\item[(C3)] \textbf{Explicit rate of convergence.} Theorem~\ref{thm:rate}
delivers an explicit decay rate 
\begin{equation*}
E(x)-y(x)\le C_0\,\Phi(x)+\int_{x_0}^x \Phi(x)\,\Phi(s)^{-1}|E^{\prime
}(s)|\,ds,
\end{equation*}
where $\Phi$ is the fundamental solution of the linearization. To the best
of our knowledge, this is the first quantitative rate available for
non-autonomous Abel equations of arbitrary degree.

\item[(C4)] \textbf{Degree-reduction principle.} Theorem~\ref{thm:reduction}
proves that any known particular solution reduces the degree of %
\eqref{eq:abel-gen} by one, and that the cubic case maps to the Abel
equation of the second kind under the substitution $v=1/u$. The proof is
self-contained and elementary.

\item[(C5)] \textbf{High-order numerical framework.} We implement a 3-stage
Radau IIA scheme of classical order $p=5$, prove its $L$-stability for the
linearization of \eqref{eq:abel-gen}, and exhibit nine-digit agreement with
the analytical plateaus on three representative cubic cases.

\item[(C6)] \textbf{Complete economic application.} Section~\ref%
{sec:economics} derives a generalized Merton-type structural credit-risk
model with state-dependent volatility; reduces the long-maturity spread
profile to a cubic Abel equation \eqref{eq:abel-spread}; verifies the
structural assumptions (A1)--(A5) and the asymptotic assumptions (B1)--(B3)
on a concrete normalised calibration; and computes the resulting
state-profile credit-spread plateau both analytically and numerically.
\end{enumerate}

\subsection{Organisation}

Section~\ref{sec:prelim} fixes notation and the structural assumptions.
Section~\ref{sec:main} contains the existence, plateau, and convergence-rate
theorems together with complete proofs. Section~\ref{sec:reduction} develops
the degree-reduction theory. Section~\ref{sec:regularity} addresses
regularity. Section~\ref{sec:numeric} describes the Radau IIA
discretisation; Section~\ref{sec:cases} presents the three case studies.
Section~\ref{sec:economics} is devoted to the financial application. Section~%
\ref{sec:literature} positions the results in the literature, Section~\ref%
{sec:conclusions} concludes, and the Appendix contains the complete Python
source.

\section{Notation and Preliminaries}

\label{sec:prelim}

\subsection{Function spaces}

Let $I:=[x_0,\infty)\subset\mathbb{R}$ with $x_0\in\mathbb{R}$ fixed. For $%
k\in\mathbb{N}$, $C^k(I)$ denotes the space of $k$-times continuously
differentiable real-valued functions on $I$. For $p\in[1,\infty]$, $L^p(I)$
and $W^{1,p}(I)$ denote the usual Lebesgue and Sobolev spaces. By $C^{0,1}_{%
\mathrm{loc}}(I)$ we mean functions locally Lipschitz on $I$. Throughout the
paper, $C$ denotes a generic positive constant whose value may change from
line to line but never depends on the integration variable.

\subsection{The generalized Abel equation in normal form}

Dividing \eqref{eq:abel-gen} through by $a_n(x)\neq0$, our object of study
takes the \emph{normal form} 
\begin{equation}
\frac{y^{\prime }(x)}{a_n(x)}=F(x,y(x)),\qquad
F(x,y):=y^n+\sum_{k=0}^{n-1}\lambda_k(x)y^k,\qquad x\in I,
\label{eq:abel-normal}
\end{equation}
with $\lambda_k(x):=a_k(x)/a_n(x)$ for $k=0,1,\dots,n-1$.

\subsection{Structural assumptions}

\label{sec:assumptions}

Throughout the paper, we impose the following structural assumptions. The
last one is a global one-sided condition on the physically relevant branch;
it is precisely the hypothesis that rules out crossings toward another
positive root before the stable branch is reached.

\smallskip \noindent\textbf{(A1) Leading coefficient.} The function $a_n\in
C(I)$ satisfies 
\begin{equation*}
a_n(x)\ge m>0\qquad\text{for all }x\in I,
\end{equation*}
for some constant $m>0$.

\smallskip \noindent\textbf{(A2) Continuous lower coefficients.} For every $%
k\in\{0,1,\dots,n-1\}$, the coefficient $\lambda_k$ belongs to $C(I)$.

\smallskip \noindent\textbf{(A3) Equilibrium branch.} There exists a non-decreasing
function $E\in C^1(I)$, called the \emph{moving equilibrium} or \emph{%
adiabatic branch}, such that 
\begin{equation*}
F(x,E(x))=0\qquad\text{for all }x\in I,
\end{equation*}
and $E(x)>0$ on $I$.

\smallskip \noindent\textbf{(A4) Pointwise stability of the branch.} There
exists a continuous function $\alpha:I\to(0,\infty)$ such that 
\begin{equation*}
\Lambda(x):=\partial_y F\bigl(x,E(x)\bigr)=\sum_{k=1}^{n}
k\,\lambda_k(x)\,E(x)^{k-1}\le -\alpha(x)<0\qquad\text{for all }x\in I,
\end{equation*}
where we adopt the convention $\lambda_n(x)\equiv 1$.

\smallskip \noindent\textbf{(A5) One-sided restoring field and root
separation.} The branch $E$ is the first positive stable barrier seen by the
trajectory issued from the origin:
\begin{equation}
F(x,y)>0\qquad\text{for every }x\in I\text{ and }0\le y<E(x).
\label{eq:A5-sign}
\end{equation}
Moreover, whenever $E$ is bounded and converges to $L>0$, the positivity is
uniform away from the branch in the following sense: for every $\eta>0$ there
exist $X_\eta\ge x_0$ and $c_\eta>0$ such that
\begin{equation}
F(x,y)\ge c_\eta\qquad\text{for all }x\ge X_\eta,\quad
0\le y\le E(x)-\eta .
\label{eq:A5-separation}
\end{equation}
Equivalently, $F(x,\cdot)$ has no zero in the closed strip
$[0,E(x)-\eta]$ for large $x$, uniformly on each tail. In the cubic examples
below this condition follows by explicit root factorisation or by the
implicit-function theorem and the simplicity of the limiting root.

The assumptions (A1)--(A4) guarantee that, in the linearization around the branch $%
E(x)$, the effective damping coefficient%
\begin{equation*}
a_{n}(x)\Lambda (x)\leq -m\alpha (x)<0,
\end{equation*}
pushes deviations exponentially towards zero. 
The condition (A4) appears for the first time in the author's work \cite{Covei2026A} (see also the references cited therein). Given the short time elapsed since the publication of these results, it is unlikely that this condition has already been discussed in other sources. Our theoretical construction is very straightforward: instead of working directly with the differential equation in its original form, we divide it by the coefficient $a_{n}$, thereby facilitating the introduction of new results which, quite surprisingly, have not been observed until now and which moreover possess a practical and applicable character.
The following lemma, which
will be invoked repeatedly, formalises this observation.

\begin{lemma}[Sign of the vector field below the branch]
\label{lem:sign}
Assume \textnormal{(A1)--(A5)}. Then
\begin{equation*}
F(x,y)>0\qquad\text{whenever }x\in I\text{ and }0\le y<E(x).
\end{equation*}
\end{lemma}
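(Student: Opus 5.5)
The statement of Lemma~\ref{lem:sign} coincides verbatim with the first half of assumption (A5), namely the sign condition \eqref{eq:A5-sign}. The direct proof is therefore a one-line invocation: fix $x\in I$ and $0\le y<E(x)$, and read off $F(x,y)>0$ from \eqref{eq:A5-sign}. I would present this first, noting explicitly that none of (A1)--(A4) is needed for this implication.

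Since the lemma is advertised as formalising the stabilising role of (A1)--(A4), I would add a short supplementary argument. It shows the sign condition is consistent with the local structure near the branch and is not an extra constraint there. By (A2) and (A3), $F(x,\cdot)$ is a polynomial with continuous coefficients and $F(x,E(x))=0$. Taylor's formula in $y$ then gives
\[
F(x,y)=\Lambda(x)\,(y-E(x))+R(x,y),\qquad |R(x,y)|\le C(x)\,(E(x)-y)^2,
\]
where $C(x)$ is controlled by the coefficients on a compact $y$-interval. By (A4), the linear term is at least $\alpha(x)(E(x)-y)>0$ for $y<E(x)$. Hence $F(x,y)>0$ on a left neighbourhood $(E(x)-\delta(x),E(x))$ of the branch.

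The remaining region $[0,E(x)-\delta(x)]$ is where the local argument gives nothing. A positive root of $F(x,\cdot)$ below $E(x)$ cannot be excluded by (A1)--(A4) alone: the linearization controls only a neighbourhood of $E$. This is exactly the ``root separation'' content of (A5), so I would appeal to \eqref{eq:A5-sign} there. I expect this to be the only genuine obstacle, and the point of the plan is to recognise that it is resolved by hypothesis rather than derived.

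I would close with a remark that the separation part \eqref{eq:A5-separation} is not used here. It is reserved for the plateau theorem, where a uniform lower bound $c_\eta$ is needed on tails.
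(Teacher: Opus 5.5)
Your proposal is correct and matches the paper's proof: the lemma is a direct restatement of the sign condition \eqref{eq:A5-sign}. The paper likewise adds only the remark that the Taylor expansion at $E(x)$ makes the condition natural near a simple stable branch, while local stability cannot exclude extra roots below $E(x)$, which is why (A5) imposes it globally; your supplementary local argument is a correct, more detailed version of that remark.
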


\begin{proof}
This is exactly the one-sided restoring condition \eqref{eq:A5-sign}. The
pointwise Taylor expansion at $E(x)$ explains why the condition is natural
near a simple stable branch; the global assertion over the whole interval
$[0,E(x))$ is imposed in (A5), because local stability alone does not exclude
additional roots below $E(x)$.
\end{proof}

\begin{lemma}[\textbf{Barrier comparison principle}]
\label{lem:barrier} Let $\underline y,\overline y\in C^1(I)$ satisfy 
\begin{equation*}
\underline y^{\prime }(x)\le a_n(x)F(x,\underline y(x)),\qquad \overline
y^{\prime }(x)\ge a_n(x)F(x,\overline y(x)),\qquad x\in I,
\end{equation*}
and $\underline y(x_0)\le y(x_0)\le \overline y(x_0)$, where $y\in C^1(I)$
solves \eqref{eq:abel-normal}. Then $\underline y(x)\le y(x)\le \overline
y(x)$ for every $x\in I$.
\end{lemma}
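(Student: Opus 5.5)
The plan is the standard comparison argument for scalar first-order ODEs, using the local Lipschitz continuity of $f(x,y):=a_n(x)F(x,y)$ in $y$. By (A1)--(A2), $f$ is continuous in $x$ and polynomial in $y$. Hence on every compact set $[x_0,X]\times[-R,R]$ there is a Lipschitz constant $K=K(X,R)$ with
\[
|f(x,u)-f(x,v)|\le K|u-v| .
\]
Since $\underline y$, $y$ and $\overline y$ are continuous on $I$, they are bounded on each $[x_0,X]$. It therefore suffices to fix an arbitrary $X>x_0$, choose $R$ bounding all three functions on $[x_0,X]$, and prove $\underline y\le y\le\overline y$ there. I will treat the upper inequality; the lower one is symmetric, with the roles of sub- and supersolution exchanged.

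Set $w:=y-\overline y$, so that $w(x_0)\le 0$ and
\[
w'(x)\le f(x,y(x))-f(x,\overline y(x))\le K\,w(x)^+ \quad\text{on }[x_0,X],
\]
where $w^+=\max(w,0)$. The key step is a Gronwall argument applied to $\phi(x):=\bigl(w(x)^+\bigr)^2$. This function is $C^1$, with $\phi'=2w^+w'$. On the set where $w>0$ we get $\phi'\le 2K(w^+)^2=2K\phi$, and where $w\le 0$ we have $\phi'=0$. Therefore $\phi'\le 2K\phi$ everywhere on $[x_0,X]$. This gives $(e^{-2Kx}\phi)'\le 0$, and since $\phi(x_0)=0$ and $\phi\ge 0$, it follows that $\phi\equiv 0$, i.e. $y\le\overline y$ on $[x_0,X]$. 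Because $X$ was arbitrary, the inequality holds on all of $I$.

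An alternative is to argue by contradiction. Suppose $x_1:=\inf\{x: w(x)>0\}$ exists; then $w(x_1)=0$ and $w'\le Kw$ on a short interval to the right of $x_1$ where $w\ge0$, which again forces $w\equiv0$ there by Gronwall and contradicts the definition of $x_1$. I prefer the $\phi$-version because it avoids the case analysis.

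The main obstacle is that the differential inequalities are non-strict. For this reason the naive first-touching-point argument (at a contact point $w'(x_1)\ge0$ versus $w'(x_1)<0$) does not close. The Lipschitz/Gronwall step is exactly what handles this, and it relies on the local Lipschitz property supplied by the polynomial structure together with (A1)--(A2). Note that (A3)--(A5) are not needed for this lemma.
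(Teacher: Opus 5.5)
Your proof is correct, but it takes a somewhat different route from the paper's. The paper sets $w=y-\underline y$ and uses the Mean Value Theorem to turn the nonlinear inequality into an exact \emph{linear} one, $w'\ge L(x)\,w$ with $L(x)=a_n(x)\,\partial_yF(x,\eta(x))$. This inequality holds whatever the sign of $w$. An integrating factor then gives $w(x)\ge w(x_0)\exp\bigl(\int_{x_0}^x L\bigr)\ge 0$ directly on all of $I$, with no truncation and no restriction to compact intervals.

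You use only a uniform Lipschitz constant $K$ on a compact box. That bounds $w'$ from above only on $\{w>0\}$, which is why you need the positive part $\phi=(w^+)^2$ and the localisation to $[x_0,X]$.

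What each route buys:
\begin{itemize}
\item The paper's argument is shorter. Its quantitative bound also carries a strict inequality at $x_0$ forward to all $x$.
\item Its claim that $L$ is continuous needs a word of justification, because the intermediate point $\eta(x)$ is not chosen canonically. The claim is true here: for polynomial $F$ you may replace the mean-value derivative by the divided difference $L(x)=a_n(x)\sum_{k=1}^{n}\lambda_k(x)\sum_{j=0}^{k-1}y(x)^j\,\underline y(x)^{k-1-j}$ (with $\lambda_n\equiv 1$), which is continuous.
\item Your argument avoids this issue entirely. It works unchanged for any right-hand side that is merely locally Lipschitz in $y$.
\end{itemize}

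Two small points of precision.
\begin{itemize}
\item Your displayed claim $w'\le K\,w^+$ ``on $[x_0,X]$'' is false where $w<0$. There you only have $w'\le K|w|$, and $f(x,y)-f(x,\overline y)$ can be positive. You only use the bound on $\{w>0\}$, where it does hold, so the $\phi$-argument is unaffected.
\item In your alternative contradiction sketch, an interval to the right of $x_1$ on which $w\ge 0$ need not exist, because $w$ may change sign infinitely often near $x_1$. The standard repair is to pick $x_2$ with $w(x_2)>0$, set $x_3:=\sup\{x<x_2: w(x)\le 0\}$, and apply Gronwall on $(x_3,x_2]$. There $w>0$ and $w(x_3)\le 0$, which forces $w(x_2)\le 0$, a contradiction.
\end{itemize}
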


\begin{proof}
This is the classical scalar comparison principle for first-order ODEs with
locally Lipschitz right-hand side. The function 
\begin{equation*}
(x,y)\mapsto a_{n}(x)F(x,y)
\end{equation*}
is locally Lipschitz in $y$ uniformly on compact $x$-sets, by continuity of
the coefficients and the polynomial structure of $F$. Setting 
\begin{equation*}
w(x):=y(x)-\underline{y}(x),w(x_{0})\geq 0
\end{equation*}%
and 
\begin{eqnarray*}
w^{\prime }(x) &\geq &a_{n}(x)\bigl[F(x,y(x))-F(x,\underline{y}(x))\bigr] \\
&=&a_{n}(x)\,\partial _{y}F(x,\eta (x))\,w(x),
\end{eqnarray*}%
for some $\eta (x)$ between $y(x)$ and $\underline{y}(x)$, by the Mean Value
Theorem. The linear differential inequality $w^{\prime }\geq L(x)w$ with
continuous $L$ and $w(x_{0})\geq 0$ yields 
\begin{equation*}
w(x)\geq w(x_{0})\exp (\int_{x_{0}}^{x}L)\geq 0\text{ on }I
\end{equation*}%
(\cite{Coddington}). The upper bound $y(x)\leq \overline{y}(x)$ follows by
the same argument applied to $\overline{y}-y$.
\end{proof}

\section{Main Results}

\label{sec:main}

\subsection{Existence and uniqueness}

\begin{theorem}[\textbf{Global existence and uniqueness}]
\label{thm:existence} Assume (A1)--(A5) and suppose $E(x_{0})\geq 0$.
Suppose further that the constant term of $F$ satisfies $\lambda _{0}(x)\geq
0$ for every $x\in I$, with strict inequality on a dense subset of $I$. Then
the Cauchy problem 
\begin{equation}
\frac{y^{\prime }(x)}{a_{n}(x)}=F(x,y(x)),\qquad y(x_{0})=0,
\label{eq:cauchy}
\end{equation}%
admits a unique solution $y\in C^{1}(I)$. Moreover, $y$ is non-decreasing on 
$I$ and satisfies 
\begin{equation}
0\leq y(x)\leq E(x)\qquad \text{for every }x\in I,  \label{eq:trapping}
\end{equation}%
with strict inequalities $0<y(x)<E(x)$ on the open set 
\begin{equation*}
\{x\in I:E(x)>0\}\cap (x_{0},\infty ).
\end{equation*}
\end{theorem}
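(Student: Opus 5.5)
The plan is to combine local Picard--Lindel\"of existence with an a priori trapping estimate, and then continue the solution to all of $I$. The right-hand side $f(x,y):=a_n(x)F(x,y)$ is continuous in $x$ by (A1)--(A2). It is a polynomial in $y$, so it is locally Lipschitz in $y$ uniformly on compact $x$-sets. Picard--Lindel\"of therefore gives a unique maximal solution $y\in C^1([x_0,\omega))$ with $\omega\le\infty$. Uniqueness on any common interval follows from the same local Lipschitz property via Gronwall. The standard continuation alternative says that if $\omega<\infty$ then $|y(x)|\to\infty$ as $x\uparrow\omega$. Global existence thus reduces to showing that $y$ stays in the strip $[0,E(x)]$ on $[x_0,\omega)$. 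This suffices because $E$ is continuous, hence bounded on $[x_0,\omega]$ when $\omega<\infty$.

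For the upper barrier I will apply Lemma~\ref{lem:barrier} on $[x_0,\omega)$; its proof is local, so it works on subintervals. Take $\overline y:=E$. By (A3), $E$ is non-decreasing and $F(x,E(x))=0$, so $E'(x)\ge 0=a_n(x)F(x,E(x))$, making $E$ a supersolution. Also $E(x_0)\ge 0=y(x_0)$. Hence $y\le E$.

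For the lower barrier take $\underline y:=0$. Then $\underline y'=0\le a_n(x)\lambda_0(x)=a_n(x)F(x,0)$ by (A1) and $\lambda_0\ge0$, so $0$ is a subsolution and $y\ge 0$. Consequently $y$ is bounded on bounded intervals, and so $\omega=\infty$.

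Monotonicity follows at once: since $0\le y(x)\le E(x)$, whenever $y(x)<E(x)$ Lemma~\ref{lem:sign} gives $F(x,y(x))>0$, hence $y'(x)>0$. Where $y(x)=E(x)$ we have $y'(x)=0$. In either case $y'\ge0$, so $y$ is non-decreasing.

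The strict inequalities are the delicate step. For $y>0$, note that $E(x_0)>0$ by (A3), so $y(x_0)=0<E(x_0)$. Then $y'(x_0)=a_n(x_0)F(x_0,0)>0$ by Lemma~\ref{lem:sign}, so $y>0$ just to the right of $x_0$. Monotonicity then gives $y(x)>0$ for all $x>x_0$. (Alternatively one can use the density of $\{\lambda_0>0\}$: if $y$ vanished on $[x_0,x_1]$, then $y'=a_n\lambda_0$ would be positive somewhere there, a contradiction.)

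For $y<E$, I expect the main obstacle to be excluding a first touching point $x^\ast$ with $y(x^\ast)=E(x^\ast)$. I would argue via uniqueness applied to the difference $w:=E-y\ge0$. We have
\begin{equation*}
w'=E'-a_n\bigl[F(x,E)-F(x,y)\bigr]\ge -a_n(x)\,\partial_yF(x,\eta(x))\,w=:-K(x)\,w ,
\end{equation*}
with $K$ bounded on compacts by the Mean Value Theorem. This gives $w(x)\ge w(x_0)\exp\bigl(-\int_{x_0}^xK\bigr)$. Since $w(x_0)=E(x_0)>0$, we conclude $w>0$ everywhere, which is exactly the same linear-inequality argument as in Lemma~\ref{lem:barrier}. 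This closes the proof. The only care needed is that $\eta(x)$ depends measurably on $x$. To avoid that issue I would define $K(x)=a_n(x)\frac{F(x,E)-F(x,y)}{E-y}$ where $w\neq0$ and $K(x)=a_n(x)\partial_yF(x,E)$ otherwise. This $K$ is continuous, being a difference quotient of a polynomial.
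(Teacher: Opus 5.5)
Your proposal is correct and takes the same route as the paper: Picard--Lindel\"of, the barriers $\underline y\equiv 0$ and $\overline y=E$ via Lemma~\ref{lem:barrier}, continuation, and (A5) for monotonicity. Your strict-inequality argument, via $E(x_0)>0$ and the linear inequality for $w=E-y$, makes precise what the paper only calls ``the strong form of the comparison argument''. One harmless sign slip: since $F(x,E)=0$, one has $w'=E'-a_nF(x,y)=E'+a_n[F(x,E)-F(x,y)]\ge K(x)\,w$, not $\ge -K(x)\,w$; the conclusion $w>0$ is unaffected because only the continuity of $K$ is used.
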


\begin{remark}
The hypothesis $\lambda_0\ge 0$ is equivalent to $F(x,0)\ge 0$. In the
examples it is automatic because $E(x)$ is the smallest positive stable root
of the cubic $F(x,\cdot)$ and the sign of $F$ on $[0,E(x))$ is verified
explicitly. The degenerate case $E(x_0)=0$ (e.g.\
Case~2 of Section~\ref{sec:cases} with $x_0=1$) is handled by applying the
theorem on $[x_0+\varepsilon,\infty)$ for arbitrarily small $\varepsilon>0$
and passing to the limit, the convergence following from continuous
dependence of ODE solutions on initial data (\cite{Coddington}).
\end{remark}

\begin{proof}
We divide the proof into three steps. 

\smallskip
\textbf{Step 1 (Local existence and uniqueness).}
The map
\[
(x,y)\mapsto a_n(x)F(x,y)
\]
is continuous on $I\times\mathbb{R}$ by (A1)–(A2) and polynomial (hence locally
Lipschitz) in $y$. The classical Picard–Lindelöf theorem (see \cite{Coddington})
yields a unique solution
\[
y\in C^1([x_0,x_0+\eta))
\]
for some $\eta>0$. Let $[x_0,X_{\max})$ denote the maximal forward interval of
existence, with $X_{\max}\le\infty$.

\smallskip
\textbf{Step 2 (Trapping in $[0,B]$ via barrier comparison).}
We apply Lemma~\ref{lem:barrier} with suitable sub- and super-solutions.

\emph{Sub-solution.}
The constant function $\underline{y}\equiv 0$ satisfies
\[
\underline{y}'(x)=0\le a_n(x)\,F(x,0)=a_n(x)\lambda_0(x)
\qquad\text{for every }x\in I,
\]
by (A1) and $\lambda_0\ge 0$. Hence $\underline{y}$ is a sub-solution of
\eqref{eq:cauchy}.

\emph{Super-solution.}
Define
\[
\overline{y}(x):=E(x),\qquad x\in I.
\]

By assumption (A3), $E\in C^{1}(I)$ is the equilibrium branch and satisfies
\[
F(x,E(x))=0 \qquad\text{for all }x\in I.
\]
Since $E$ is non-decreasing on $I$, we have
\[
\overline{y}'(x)=E'(x)\ge 0.
\]
Combining these two facts gives
\[
\overline{y}'(x)=E'(x)\;\ge\;0
\;=\;a_{n}(x)\,F(x,E(x))
\qquad\text{for all }x\in I,
\]
so $\overline{y}$ is a super-solution of \eqref{eq:cauchy} on $I$.

At the initial point,
\[
\underline{y}(x_{0})=0=y(x_{0})\le E(x_{0})=\overline{y}(x_{0}),
\]
and Lemma~\ref{lem:barrier} yields the trapping bound
\[
0\le y(x)\le E(x)\qquad\text{for every }x\in I.
\]

\smallskip
\textbf{Step 3 (No finite-time blow-up).}
From Step 2 we infer that $y$ is bounded on every compact
subinterval $[x_0,X]\subset[x_0,X_{\max})$. Suppose, for contradiction, that
$X_{\max}<\infty$. Then $y$ is bounded on $[x_0,X_{\max})$, hence
\[
\limsup_{x\to X_{\max}^{-}}|y(x)|<\infty.
\]
By the standard extension theorem for ODEs (see \cite{Coddington}), the solution
extends beyond $X_{\max}$, contradicting maximality. Thus $X_{\max}=\infty$ and
$y\in C^1(I)$.

Finally, the trapping bound gives $0\le y(x)\le E(x)$. If $y(x)<E(x)$, then
(A5) gives $F(x,y(x))>0$, hence $y'(x)>0$ by (A1). If $y(x)=E(x)$ at some
point, the vector field equals zero and the right derivative cannot be
negative without violating the already established upper barrier. Thus
$y$ is non-decreasing on $I$; the stated strict inequalities follow from
the strong form of the comparison argument and the dense positivity of
$\lambda_0$.
\end{proof}

\subsection{The Asymptotic Plateau Theorem}

\label{sec:plateau}

We now turn to the long-time behaviour of the solution given by Theorem~\ref%
{thm:existence}. The structural assumptions are strengthened with two
integral conditions that ensure the convergence of $y(x)$ to the asymptotic
value of $E$.

\begin{theorem}[\textbf{Asymptotic Plateau Theorem -- qualitative convergence%
}]
\label{thm:plateau} Assume (A1)--(A5) together with:

\begin{enumerate}
\item[(B1)] The branch $E\in C^1(I)$ is bounded and admits a positive finite
limit 
\begin{equation*}
L:=\lim_{x\to\infty}E(x)\in(0,\infty).
\end{equation*}

\item[(B2)] The damping is uniformly bounded below by a positive constant: 
\begin{equation*}
\alpha _{0}:=\inf_{x\in I}\alpha (x)>0.
\end{equation*}%
Then the unique solution $y\in C^{1}(I)$ of \eqref{eq:cauchy} satisfies 
\begin{equation}
\lim_{x\rightarrow \infty }y(x)=L.  \label{eq:plateau}
\end{equation}
\end{enumerate}
\end{theorem}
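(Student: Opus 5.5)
We take the solution from Theorem~\ref{thm:existence}: it is defined on all of $I$, non-decreasing, and satisfies $0\le y(x)\le E(x)$. If the hypotheses $E(x_0)\ge0$ and $\lambda_0\ge0$ needed there are not explicitly in force, note that (A3) gives $E>0$ and (A5) at $y=0$ gives $F(x,0)=\lambda_0(x)>0$, so Theorem~\ref{thm:existence} applies. By (B1), $E$ is non-decreasing and tends to $L$, so $E(x)\le L$ for all $x$. Hence $y$ is monotone and bounded above by $L$, and the limit $\ell:=\lim_{x\to\infty}y(x)$ exists with $0<\ell\le L$. The whole task is to rule out $\ell<L$.

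Suppose $\ell<L$ and set $\eta:=(L-\ell)/2>0$. Since $E(x)\to L$, there is $X_1$ with $E(x)\ge L-\eta/2$ for $x\ge X_1$. Because $y$ is non-decreasing, $y(x)\le\ell=L-2\eta$ for all $x$. For $x\ge X_1$ this gives
\[
y(x)\le E(x)-\tfrac{3}{2}\eta\le E(x)-\eta .
\]
The uniform separation condition \eqref{eq:A5-separation} supplies $X_\eta$ and $c_\eta>0$ with $F(x,y(x))\ge c_\eta$ for $x\ge\max(X_1,X_\eta)=:X_2$. Combined with (A1), this yields $y'(x)=a_n(x)F(x,y(x))\ge m\,c_\eta$ on $[X_2,\infty)$. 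Integrating gives $y(x)\ge y(X_2)+m c_\eta(x-X_2)\to\infty$, which contradicts $y\le L$. Therefore $\ell=L$, which is \eqref{eq:plateau}.

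The only delicate point is this contradiction step: we must make sure the trajectory sits uniformly away from the branch on a whole tail, so that the uniform lower bound from (A5) applies rather than mere pointwise positivity of $F$. This is why we use monotonicity of $y$, which gives $y\le\ell$ everywhere, together with the convergence of $E$. The damping assumption (B2) and the stability condition (A4) are not needed for the qualitative limit; they only matter for the rate in Theorem~\ref{thm:rate}.

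As a fallback, in case one wants to avoid the separation clause of (A5), we would instead argue by compactness. On the compact set $\{(x,y): x\ge X,\ 0\le y\le L-\eta\}$ the coefficients would need limits as $x\to\infty$, and positivity of the limit polynomial on $[0,L)$ would then follow from continuity. Since (A2) does not provide such limits, we rely on \eqref{eq:A5-separation} as stated.
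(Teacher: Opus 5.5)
Your proof is correct and follows the paper's argument. Monotonicity and the trapping bound give a limit $\ell$ of $y$. If $\ell<L$, the tail-separation clause \eqref{eq:A5-separation} of (A5) forces $y'\ge m\,c_\eta$ on a tail, which contradicts boundedness.

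The differences are cosmetic. You obtain the uniform gap $E(x)-y(x)\ge\eta$ from $y\le\ell$ together with $E\to L$; the paper gets it from the two limits directly. You also check explicitly, via (A3) and (A5), the hypotheses $E(x_0)\ge0$ and $\lambda_0\ge0$ of Theorem~\ref{thm:existence}, which the paper leaves implicit. One small slip: the monotonicity of $E$ comes from (A3), not from (B1).
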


\begin{remark}
\label{rem:B3} The proof of \eqref{eq:plateau} given below does not require
any quantitative integrability assumption on $E^{\prime }$. The
supplementary hypothesis 
\begin{equation}
\text{(B3)}\qquad\int_{x_0}^{\infty}\Phi(s)^{-1}|E^{\prime
}(s)|\,ds<\infty,\qquad
\Phi(s):=\exp\!\left(\int_{x_0}^{s}a_n(\tau)\Lambda(\tau)\,d\tau\right),
\label{eq:B3}
\end{equation}
is needed only for the quantitative convergence rate established in Theorem~%
\ref{thm:rate} below.
\end{remark}

\begin{proof}
We organise the proof in three steps. Each step indicates explicitly which
hypothesis it uses.

\smallskip
\textbf{Step 1 (Existence of a finite limit for $y$).}
\emph{[Uses Theorem~\ref{thm:existence} and (B1).]}
By Theorem~\ref{thm:existence}, the solution $y\in C^{1}(I)$ is non-decreasing and
satisfies the trapping bound
\[
0 \le y(x) \le E(x), \qquad x\in I.
\]
By (B1), the branch $E$ is bounded; set
\[
M_{E}:=\sup_{x\in I} E(x) < \infty.
\]
Since $y$ is non-decreasing and bounded above by $M_{E}$, the limit
\[
L_{y}:=\lim_{x\to\infty} y(x)
\]
exists and satisfies $L_{y}\in[0,M_{E}]$.

Moreover, Theorem~\ref{thm:existence} ensures that $y(x)>0$ for every $x>x_{0}$.
In particular, choosing any fixed point $x_{1}>x_{0}$ (for instance $x_{1}=x_{0}+1$),
we have
\[
y(x_{1})>0,
\]
and by monotonicity,
\[
L_{y}=\lim_{x\to\infty} y(x) \;\ge\; y(x_{1}) \;>\; 0.
\]
Thus $L_{y}$ is strictly positive.

\smallskip \textbf{Step 2 (Identification $L_{y}=L$ by contradiction).}
\emph{[Uses (A1), (A5), and (B1).]} Suppose, for contradiction, that $L_{y}<L$%
. Set 
\begin{equation*}
\eta :=(L-L_{y})/2>0.
\end{equation*}%
Since $E(x)\rightarrow L$ and $y(x)\rightarrow L_{y}$, there exists $%
x_{3}\geq x_{0}$ such that 
\begin{equation}
E(x)-y(x)\geq \eta \qquad \text{for all }x\geq x_{3}.  \label{eq:z-below}
\end{equation}
By the tail-separation part of (A5), after increasing $x_3$ if necessary
there is a constant $c_\eta>0$ such that
\begin{equation}
F(x,y(x))\ge c_\eta>0,\qquad x\ge x_3 .
\label{eq:F-positive-tail}
\end{equation}
From the ODE \eqref{eq:abel-normal} and (A1),
\begin{equation}
y^{\prime }(x)=a_n(x)F(x,y(x))\ge m\,c_\eta\;>\;0\qquad\text{for all }%
x\ge x_3.  \label{eq:y-prime-lb}
\end{equation}
Integrating \eqref{eq:y-prime-lb} over $[x_3,X]$, 
\begin{equation*}
y(X)-y(x_3)\ge m\,c_\eta\,(X-x_3)\xrightarrow[X\to\infty]{}+\infty,
\end{equation*}
contradicting the boundedness of $y$ established in Step~1. We conclude $%
L_y=L$, which proves \eqref{eq:plateau}.
\end{proof}

\subsection{Explicit convergence rate}

\begin{theorem}[\textbf{Quantitative convergence}]
\label{thm:rate} Under the hypotheses of Theorem~\ref{thm:plateau} together
with the integrability hypothesis (B3) of equation~\eqref{eq:B3}, the
deviation $z(x)=y(x)-E(x)$ satisfies the explicit bound 
\begin{equation}
\qquad 
\begin{array}{l}
|z(x)|\leq \Phi (x)\cdot K(x), \\ 
K(x):=|z(x_{0})|+\int_{x_{0}}^{x}\Phi (s)^{-1}|E^{\prime
}(s)|\,ds+C_{R}M_{E}\!\int_{x_{0}}^{x}\Phi (s)^{-1}|z(s)|\,a_{n}(s)\,ds,%
\end{array}
\label{eq:rate}
\end{equation}%
where $\Phi $ is defined in \eqref{eq:Phi-def}, $M_{E}:=\sup_{x\in I}E(x)$,
and $C_{R}>0$ is a constant such that the Taylor remainder $\mathcal{R}$ in %
\eqref{eq:taylor} satisfies $|\mathcal{R}(x,\zeta )|\leq C_{R}\zeta ^{2}$ on 
$I\times \lbrack -M_{E},M_{E}]$. In particular, if $E^{\prime }\in L^{1}(I)$
and $\Phi (s)^{-1}\leq K_{0}$ on the support of $E^{\prime }$, then there
exists a constant $C^{\star }>0$ depending only on $M_{E},C_{R},K_{0}$, and $%
\Vert E^{\prime }\Vert _{L^{1}}$ such that 
\begin{equation}
|y(x)-L|\leq C^{\star }\,\Phi (x)+|E(x)-L|.  \label{eq:rate-simple}
\end{equation}
\end{theorem}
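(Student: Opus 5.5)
The bound \eqref{eq:rate} will come from a linearise-and-integrate argument (variation of constants, i.e.\ Duhamel) applied to the deviation $z=y-E$, with $\Phi$ from \eqref{eq:B3} as the integrating factor.

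\textbf{Equation for $z$.} Subtract the identity $a_n F(x,E(x))=0$ from \eqref{eq:cauchy} to get
\[
z'(x)=a_n(x)\bigl[F(x,E+z)-F(x,E)\bigr]-E'(x).
\]
Expand $F(x,\cdot)$ by Taylor's formula at $E(x)$ (this is the expansion \eqref{eq:taylor}):
\[
F(x,E+z)-F(x,E)=\Lambda(x)z+\mathcal R(x,z).
\]
Since $F$ is a polynomial in $y$ of degree $n$, $\mathcal R(x,z)=\sum_{j\ge2}\frac{1}{j!}\partial_y^jF(x,E(x))z^j$. By Theorem~\ref{thm:existence} we have $-M_E\le z\le 0$. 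So on $I\times[-M_E,M_E]$ we obtain $|\mathcal R|\le C_R z^2$, provided the $\lambda_k$ are bounded on $I$; the statement takes $C_R$ as given, so we assume this. We then keep one factor $z$ and write $|\mathcal R(x,z)|\le C_R|z|\,|z|\le C_RM_E|z|$. This is the source of the factor $C_RM_E$ in \eqref{eq:rate}.

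\textbf{Variation of constants.} We have $\Phi'=a_n\Lambda\Phi$ and $\Phi>0$, so
\[
(\Phi^{-1}z)'=\Phi^{-1}\bigl(a_n\mathcal R(x,z)-E'\bigr).
\]
Integrate over $[x_0,x]$, multiply by $\Phi(x)$, take absolute values, and insert the remainder bound. This gives
\[
|z(x)|\le\Phi(x)\Bigl(|z(x_0)|+\int_{x_0}^x\Phi^{-1}|E'|\,ds+C_RM_E\int_{x_0}^x\Phi^{-1}a_n|z|\,ds\Bigr),
\]
which is exactly \eqref{eq:rate}. Hypothesis (B3) guarantees that the second integral stays bounded as $x\to\infty$.

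\textbf{Simplified bound \eqref{eq:rate-simple}.} Write
\[
|y-L|\le|z|+|E-L|,
\]
so it suffices to show $|z(x)|\le C^\star\Phi(x)$. Under the extra hypotheses, the inhomogeneous term satisfies $\int\Phi^{-1}|E'|\le K_0\|E'\|_{L^1}$.

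\textbf{Main obstacle: the nonlinear term.} The term $\int\Phi^{-1}a_n|z|$ contains the unknown $z$ and must be absorbed. The first attempt is Gronwall's inequality applied to $u:=\Phi^{-1}|z|$, which satisfies
\[
u(x)\le A+C_RM_E\int_{x_0}^x a_n\Phi\, u\,ds .
\]
This yields $u\le A\exp\bigl(C_RM_E\int a_n\Phi\bigr)$. By (A4) and (B2), $a_n\Phi$ decays at least like $a_n e^{-m\alpha_0\int a_n}$, so $\int_{x_0}^\infty a_n\Phi\le 1/(m\alpha_0)<\infty$. This gives a uniform constant $C^\star$ depending on $M_E,C_R,K_0,\|E'\|_{L^1}$ (and on $m\alpha_0$).

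If this dependence must be eliminated, the fallback is to use the quadratic bound $C_R z^2$ directly together with $z\to0$ from Theorem~\ref{thm:plateau}. On a tail where $C_R|z|\le\alpha_0/2$, the remainder can be absorbed into the linear damping, i.e.\ it only replaces $\Lambda$ by $\Lambda/2$. The price is a modified $\Phi$, which we would reconcile with the stated $\Phi$ by enlarging the constant.
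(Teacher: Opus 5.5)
Your derivation of \eqref{eq:rate} is correct and coincides with the paper's: deviation equation, variation of constants, then $|\mathcal R|\le C_RM_E|z|$. The paper's extra ``choose $x_5$ and absorb'' step is not needed for that part.

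\textbf{The gap is in \eqref{eq:rate-simple}.} Your integral inequality for $u=\Phi^{-1}|z|$ carries a spurious factor $\Phi$. Since $\Phi^{-1}a_n|z|=a_nu$, what \eqref{eq:rate} actually gives is $u(x)\le A+C_RM_E\int_{x_0}^x a_nu\,ds$. Gronwall then yields only $u\le A\exp\bigl(C_RM_E\int_{x_0}^x a_n\,ds\bigr)$, which is unbounded. Keeping the quadratic bound instead gives $u\le A+C_R\int_{x_0}^x a_n\Phi u^2\,ds$, which closes only under a smallness condition such as $AC_R<\alpha_0$.

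Your fallback does not close either. A bound by $\exp\bigl(\tfrac12\int_{x_0}^x a_n\Lambda\bigr)=\Phi^{1/2}$ is not $O(\Phi)$, since $\Phi^{1/2}/\Phi=\Phi^{-1/2}\to\infty$. No enlargement of constants reconciles the two rates.

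\textbf{The missing idea is a second pass.}
\begin{enumerate}
\item The support of $E'$ is bounded: $\Phi^{-1}\le K_0$ there, while $\Phi(s)^{-1}\ge e^{m\alpha_0(s-x_0)}$.
\item Choose $x_5$ beyond that support with $C_R|z|\le\alpha_0/2$ on $[x_5,\infty)$; this is possible by Theorem~\ref{thm:plateau}.
\item On that tail $|z|'=a_n(\Lambda|z|-\mathcal R)\le-\tfrac{\alpha_0}{2}a_n|z|$, hence $\int_{x_5}^\infty a_n|z|\,ds\le 2M_E/\alpha_0$.
\item Apply Gronwall to $u(x)\le u(x_5)+C_R\int_{x_5}^x a_n|z|\,u\,ds$. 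Keep the integrable coefficient $C_Ra_n|z|$; do not replace $|z|$ by $M_E$.
\end{enumerate}
This gives $|z(x)|\le M_E\,\Phi(x_5)^{-1}e^{2C_RM_E/\alpha_0}\,\Phi(x)$ for all $x\ge x_0$, and \eqref{eq:rate-simple} follows.

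\textbf{The stated dependence of $C^\star$ is false.} The constant above depends on the trajectory through $x_5$, and this cannot be avoided. The statement claims dependence only on $M_E,C_R,K_0,\|E'\|_{L^1}$, and your version adds $m\alpha_0$; both fail. Take $n=3$, $a_3\equiv1$, $x_0=0$, and $F(y)=(y-1)(y-3)(y+\varepsilon)$ with $0<\varepsilon<\tfrac12$. Then:
\begin{itemize}
\item $E\equiv L=1$, $E'\equiv0$, $M_E=1$, and $\Lambda\equiv-2(1+\varepsilon)$;
\item $\mathcal R(\zeta)=\zeta^2(\zeta+\varepsilon-1)$, so $C_R=2$, with $m=1$ and $\alpha_0=2$;
\item (A1)--(A5) and (B1)--(B3) hold, and the support condition holds for every $K_0$.
\end{itemize}
Since $F(y)\le3(y+\varepsilon)$ on $[0,1]$, you get $y(x)\le\varepsilon e^{3x}$. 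At $T_\varepsilon=\tfrac13\log\tfrac1{2\varepsilon}$ this gives $|y-L|/\Phi\ge\tfrac12e^{2T_\varepsilon}=\tfrac12(2\varepsilon)^{-2/3}$. That is unbounded as $\varepsilon\to0$, while every listed constant stays fixed. The true dependence is on quantities like the separation constants $c_\eta$ of (A5); here $c_\eta\le3\varepsilon$. So the uniform constant your first attempt produces cannot exist.

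The paper's proof does not supply the missing step either. It asserts \eqref{eq:rate-simple} from the triangle inequality and $E'\in L^1$. Its choice of $x_5$ presupposes that $\sup_{s\ge x_5}\Phi(s)^{-1}|\zeta(s)|$ is finite, which is essentially what has to be proved.
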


\begin{proof}
\emph{[Uses Theorem~\ref{thm:plateau} and (B3).]} Set 
\begin{equation*}
\zeta (x):=y(x)-E(x).
\end{equation*}
Since $F(x,\cdot )$ is a polynomial of degree $n$, Taylor's theorem at $%
y=E(x)$ yields 
\begin{equation}
F(x,E(x)+\zeta )=\Lambda (x)\,\zeta +\mathcal{R}(x,\zeta ),\qquad \mathcal{R}%
(x,\zeta ):=\sum_{k=2}^{n}\frac{\partial _{y}^{k}F(x,E(x))}{k!}\,\zeta ^{k},
\label{eq:taylor}
\end{equation}%
where $\mathcal{R}$ is a polynomial in $\zeta $ with continuous coefficients
in $x$ and no constant or linear term. Differentiating $y=E+\zeta $ and
using \eqref{eq:abel-normal} together with $F(x,E(x))=0$ from (A3), 
\begin{equation}
\zeta ^{\prime }(x)=a_{n}(x)\Lambda (x)\,\zeta (x)+a_{n}(x)\mathcal{R}%
(x,\zeta (x))-E^{\prime }(x).  \label{eq:dev-eq}
\end{equation}%
With the fundamental solution 
\begin{equation}
\Phi (x):=\exp \!\left( \int_{x_{0}}^{x}a_{n}(s)\Lambda (s)\,ds\right) ,
\label{eq:Phi-def}
\end{equation}%
the variation-of-constants formula applied to \eqref{eq:dev-eq} yields 
\begin{equation}
\zeta (x)=\Phi (x)\zeta (x_{0})+\Phi (x)\!\int_{x_{0}}^{x}\!\Phi (s)^{-1}%
\bigl[a_{n}(s)\mathcal{R}(s,\zeta (s))-E^{\prime }(s)\bigr]\,ds.
\label{eq:VOC}
\end{equation}%
On the compact set $\{|\zeta |\leq M_{E}\}$ there exists $C_{R}>0$ such that 
\begin{equation*}
|\mathcal{R}(x,\zeta )|\leq C_{R}\zeta ^{2}\leq C_{R}M_{E}|\zeta |.
\end{equation*}
By Theorem~\ref{thm:plateau}, $\zeta (x)\rightarrow 0$; choose $x_{5}\geq
x_{0}$ such that 
\begin{equation*}
C_{R}M_{E}\sup_{s\geq x_{5}}\Phi (s)^{-1}|\zeta (s)|\cdot \sup_{s\geq x_{5}}%
\bigl(\Phi (s)a_{n}(s)\bigr)\leq 1/2.
\end{equation*}
Multiplying \eqref{eq:VOC} by $\Phi (x)^{-1}$ and bounding gives, for $x\geq
x_{5}$, 
\begin{equation*}
\Phi (x)^{-1}|\zeta (x)|\leq |\zeta (x_{5})|\Phi (x_{5})^{-1}+\tfrac{1}{2}%
\sup_{s\geq x_{5}}\Phi (s)^{-1}|\zeta (s)|+\int_{x_{5}}^{\infty }\Phi
(s)^{-1}|E^{\prime }(s)|\,ds.
\end{equation*}%
By (B3) the last integral is finite, and absorbing the $\tfrac{1}{2}$-term
yields the bound \eqref{eq:rate}. The estimate \eqref{eq:rate-simple} then
follows from 
\begin{equation*}
|y(x)-L|\leq |\zeta (x)|+|E(x)-L|
\end{equation*}%
and the integrability of $E^{\prime }$ (which guarantees $%
|E(x)-L|=|\int_{x}^{\infty }E^{\prime }(s)\,ds|\rightarrow 0$).
\end{proof}

\section{Degree Reduction via Particular Solutions}

\label{sec:reduction}

\subsection{The general reduction principle}

\begin{theorem}[\textbf{Degree reduction $N\mapsto N-1$}]
\label{thm:reduction} Consider the polynomial ODE 
\begin{equation}
y^{\prime }(x)=P_{N}(x,y(x))=\sum_{k=0}^{N}a_{k}(x)y(x)^{k},\qquad N\geq 1,
\label{eq:gen-N}
\end{equation}%
where $a_{0},\dots ,a_{N}\in C(I)$ and $a_{N}\not\equiv 0$. Suppose that $%
E_{p}\in C^{1}(I)$ is a particular solution, i.e.\ $E_{p}^{\prime
}=P_{N}(\cdot ,E_{p})$. Define 
\begin{equation*}
u(x):=y(x)-E_{p}(x).
\end{equation*}
Then $u$ satisfies 
\begin{equation}
u^{\prime }(x)=u(x)\,Q_{N-1}(x,u(x))=\sum_{k=1}^{N}c_{k}(x)u(x)^{k},
\label{eq:u-eq}
\end{equation}%
where 
\begin{equation*}
c_{k}(x)=\frac{1}{k!}\frac{\partial ^{k}P_{N}}{\partial y^{k}}%
(x,E_{p}(x))
\end{equation*}
is a polynomial expression in $E_{p}(x)$ and $a_{j}(x)$, $j\geq k$.
Moreover, the substitution $v(x):=1/u(x)$ (valid wherever $u(x)\neq 0$)
transforms \eqref{eq:u-eq} into 
\begin{equation}
v^{\prime }(x)=-c_{1}(x)\,v(x)-c_{2}(x)-\sum_{k=3}^{N}c_{k}(x)\,v(x)^{2-k},
\label{eq:v-eq}
\end{equation}%
which is a polynomial ODE in $v$ of degree $\leq N-1$ when multiplied by $%
v^{N-2}$. In particular, for $N=3$, equation \eqref{eq:v-eq} is the \emph{%
Abel equation of the second kind} 
\begin{equation}
-v^{\prime }=c_{1}(x)v+c_{2}(x)+\frac{c_{3}(x)}{v}.  \label{eq:abel-2nd}
\end{equation}
\end{theorem}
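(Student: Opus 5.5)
The plan is to prove the statement by direct substitution: the exact finite Taylor expansion of a polynomial in $y$, followed by a chain-rule computation for $v=1/u$. No earlier result of the paper is needed beyond the continuity of the coefficients.

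\textbf{Step 1 (the $u$-equation).} Let $y$ be any $C^1$ solution of \eqref{eq:gen-N} on a subinterval of $I$, and put $u=y-E_p$. Then
\begin{equation*}
u'=P_N(x,E_p+u)-P_N(x,E_p),
\end{equation*}
because $E_p'=P_N(\cdot,E_p)$. For each fixed $x$, the map $y\mapsto P_N(x,y)$ is a polynomial of degree at most $N$. Its Taylor expansion at $y=E_p(x)$ therefore terminates and is exact:
\begin{equation*}
P_N(x,E_p+u)=\sum_{k=0}^{N}\frac{1}{k!}\partial_y^kP_N(x,E_p)\,u^k .
\end{equation*}
The $k=0$ term cancels against $P_N(x,E_p)$, which yields \eqref{eq:u-eq} with $c_k$ as stated and $Q_{N-1}(x,u)=\sum_{k=1}^N c_k u^{k-1}$.

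Expanding the binomials $(E_p+u)^j$ makes the coefficients explicit:
\begin{equation*}
c_k=\sum_{j=k}^{N}\binom{j}{k}a_jE_p^{\,j-k}.
\end{equation*}
This shows that $c_k$ is a polynomial in $E_p$ and the $a_j$ with $j\ge k$. It also shows that each $c_k$ is continuous, and that $c_N=a_N\not\equiv0$, so the $u$-equation again has degree exactly $N$ but no constant term.

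\textbf{Step 2 (the $v$-equation).} On any interval where $u\neq0$, the function $v=1/u$ is $C^1$, and $v'=-u'/u^2$. Substituting \eqref{eq:u-eq} and then $u=1/v$ gives
\begin{equation*}
v'=-\sum_{k=1}^{N}c_k u^{k-2}=-c_1v-c_2-\sum_{k=3}^{N}c_kv^{2-k},
\end{equation*}
which is \eqref{eq:v-eq}. Multiplying by $v^{N-2}$ gives
\begin{equation*}
v^{N-2}v'=-c_1v^{N-1}-c_2v^{N-2}-\sum_{k=3}^{N}c_kv^{N-k}.
\end{equation*}
The right-hand side is a polynomial in $v$ of degree at most $N-1$, which is the sense of the claimed degree reduction. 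Specialising to $N=3$, the sum reduces to the single term $c_3v^{-1}$, and multiplying by $-1$ gives \eqref{eq:abel-2nd}.

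\textbf{Main obstacle.} The algebra is routine; the only delicate point is stating the domain of validity precisely. The substitution $v=1/u$ is legitimate only on intervals where $u$ does not vanish. By uniqueness (the right-hand side is locally Lipschitz in $y$, as in Step 1 of Theorem~\ref{thm:existence}), if $u(x_1)=0$ for some $x_1$ then $y\equiv E_p$ near $x_1$. Hence $u$ is either identically zero or never zero on the common interval of existence, so the case distinction is clean. I will also spell out that the reduced equation is polynomial only after multiplication by $v^{N-2}$, that is, in the form $v^{N-2}v'=\text{polynomial}$, rather than as an explicit polynomial ODE for $v'$ when $N\ge 4$.
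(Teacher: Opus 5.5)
Your proposal is correct and follows essentially the same route as the paper's proof: the exact (terminating) Taylor expansion of $P_N(x,\cdot)$ at $E_p$, cancellation of the zeroth-order term via $E_p'=P_N(\cdot,E_p)$, and the chain rule $u'=-v'/v^2$ for $v=1/u$. Your explicit formula $c_k=\sum_{j=k}^{N}\binom{j}{k}a_jE_p^{\,j-k}$, which actually justifies the claimed polynomial dependence on $E_p$ and $a_j$ with $j\ge k$, and your uniqueness argument that $u$ either vanishes identically or never vanishes are welcome additions that the paper leaves implicit.
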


\begin{proof}
Substituting $y=E_{p}+u$ into \eqref{eq:gen-N} and Taylor-expanding the
polynomial $P_{N}(x,\cdot )$ at $E_{p}(x)$, 
\begin{equation*}
P_{N}(x,E_{p}+u)=P_{N}(x,E_{p})+\sum_{k=1}^{N}\frac{1}{k!}\frac{\partial
^{k}P_{N}}{\partial y^{k}}(x,E_{p})\,u^{k}.
\end{equation*}%
Using $E_{p}^{\prime }=P_{N}(\cdot ,E_{p})$ to cancel the leading term
yields 
\begin{equation*}
u^{\prime }=\sum_{k=1}^{N}\frac{1}{k!}\frac{\partial ^{k}P_{N}}{\partial
y^{k}}(x,E_{p})\,u^{k}=\sum_{k=1}^{N}c_{k}(x)\,u^{k}.
\end{equation*}%
Factoring out $u$ in the right-hand side yields \eqref{eq:u-eq}. For the
second part, differentiate $v=1/u$ to get $u^{\prime }=-v^{\prime }/v^{2}$.
Substituting in \eqref{eq:u-eq} and dividing by $-1/v^{2}$ produces 
\begin{equation*}
v^{\prime
}=-v^{2}\sum_{k=1}^{N}c_{k}(x)\,v^{-k}=-c_{1}(x)v-c_{2}(x)-%
\sum_{k=3}^{N}c_{k}(x)v^{2-k}.
\end{equation*}%
For $N=3$, the last sum has only $k=3$ and contributes $-c_{3}(x)/v$,
yielding \eqref{eq:abel-2nd}.
\end{proof}

\subsection{Liouville's reduction revisited}

For the classical Abel equation of the first kind \eqref{eq:abel-classic},
set $a_{3}(x)\equiv 1$ for simplicity and assume a particular solution $%
E_{p} $ is known. Theorem~\ref{thm:reduction} with $N=3$ gives, after
substitution $u=y-E_{p}$, 
\begin{equation}
\quad \quad 
\begin{array}{l}
u^{\prime }:=u^{3}+\beta (x)u^{2}+\alpha (x)u, \\ 
\beta (x):=3E_{p}(x)+a_{2}(x), \\ 
\alpha (x):=3E_{p}(x)^{2}+2a_{2}(x)E_{p}(x)+a_{1}(x).%
\end{array}%
\end{equation}%
The further substitution $v=1/u$ produces 
\begin{equation*}
-v^{\prime }=\alpha (x)\,v+\beta (x)+\frac{1}{v},
\end{equation*}%
which is precisely \eqref{eq:abel-2nd} with $c_{1}=\alpha $, $c_{2}=\beta $, 
$c_{3}=1$. This recovers Liouville's reduction \cite{Liouville1903} as a
special case of our general result.

\section{Regularity and Qualitative Behaviour}

\label{sec:regularity}

\subsection{Higher-order smoothness}

\begin{proposition}[\textbf{Bootstrapping of regularity}]
\label{prop:regularity} Suppose $\lambda_k\in C^j(I)$ for $k=0,1,\dots,n-1$
and $a_n\in C^j(I)$ with $j\ge 0$. Then the solution $y$ of Theorem~\ref%
{thm:existence} belongs to $C^{j+1}(I)$.
\end{proposition}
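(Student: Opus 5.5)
The argument is a standard bootstrap by induction on $j$, using the equation
\begin{equation*}
y'(x)=G(x,y(x)),\qquad G(x,y):=a_n(x)F(x,y)=a_n(x)\Bigl(y^n+\sum_{k=0}^{n-1}\lambda_k(x)y^k\Bigr).
\end{equation*}
This holds on all of $I$, because Theorem~\ref{thm:existence} gives a global solution $y\in C^1(I)$. The key structural fact is that $G$ is a polynomial in $y$ whose coefficients $a_n$ and $a_n\lambda_k$ are products of $C^j$ functions, and hence lie in $C^j(I)$.

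\textbf{Base case $j=0$.} This is exactly the conclusion $y\in C^1(I)$ of Theorem~\ref{thm:existence}, which uses only the continuity assumed in (A1)--(A2).

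\textbf{Inductive step.} Assume the coefficients are in $C^j(I)$ with $j\ge1$. By the inductive hypothesis applied with $j-1$ (since $C^j\subset C^{j-1}$), we already have $y\in C^{j}(I)$. Then each power $y^k$ lies in $C^j(I)$, since products of $C^j$ functions are $C^j$ by the Leibniz rule. Each term $a_n\lambda_k y^k$ and $a_n y^n$ therefore lies in $C^j(I)$, and so does the finite sum $x\mapsto G(x,y(x))$. Since $y'=G(\cdot,y)$, this gives $y'\in C^j(I)$, which means $y\in C^{j+1}(I)$.

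At endpoints of $I$, differentiability is understood one-sidedly at $x_0$, and the same product argument applies there.

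An alternative route is the chain rule for compositions $x\mapsto G(x,y(x))$ with $G\in C^j$ jointly. I will avoid it, because $G$ is only $C^j$ in $x$, not jointly smooth beyond that in a way that requires checking. The polynomial structure makes the product-rule argument cleaner.

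One can also record the explicit formula obtained by differentiating the equation $j$ times. This expresses $y^{(j+1)}$ as a polynomial in $y,y',\dots,y^{(j)}$ and the derivatives of the coefficients up to order $j$, via Fa\`a di Bruno or repeated Leibniz. It is useful for the later numerical-order discussion but not needed for the proof.

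I expect no genuine obstacle. The only point requiring care is to keep the induction aligned so that the coefficient regularity $C^j$ is used exactly once, at the last step. The solution's regularity must never be claimed to exceed that of $G(\cdot,y)$, since $y'$ cannot be smoother than the coefficients themselves. For instance, with $n=1$ and $\lambda_0$ merely continuous, $y$ is no better than $C^1$, so the statement is sharp.
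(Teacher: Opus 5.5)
Your proof is correct and is the same bootstrap as the paper's. The paper fixes $j$ and inducts on $\ell\le j$, while you induct on $j$ itself, and your index bookkeeping is cleaner than the paper's written step, where the claim $G(\cdot,y(\cdot))\in C^{\ell}$ should read $C^{\ell+1}$ in order to yield $y\in C^{\ell+2}$.
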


\begin{proof}
For $j=0$, Theorem~\ref{thm:existence} already gives $y\in C^1(I)$, which is
the desired conclusion. Assume now $j\ge1$. We prove by induction that
$y\in C^{\ell+1}(I)$ for every $\ell=0,1,\dots,j$. The case $\ell=0$ is again
Theorem~\ref{thm:existence}. Suppose $y\in C^{\ell+1}(I)$ for some
$0\le\ell<j$. Since $a_n,\lambda_0,\dots,\lambda_{n-1}\in C^j(I)$, the map
\[
G(x,y):=a_n(x)\left(y^n+\sum_{k=0}^{n-1}\lambda_k(x)y^k\right)
\]
has continuous partial derivatives in $x$ up to order $j$ and is polynomial
in $y$. Composing $G$ with $x\mapsto y(x)$ shows that
$G(\cdot,y(\cdot))\in C^\ell(I)$. The equation
$y'=G(\cdot,y(\cdot))$ therefore gives $y'\in C^\ell(I)$, hence
$y\in C^{\ell+2}(I)$. Induction up to $\ell=j$ yields
$y\in C^{j+1}(I)$.
\end{proof}

\subsection{Monotone barrier method}

The trapping bound \eqref{eq:trapping} established in Theorem~\ref%
{thm:existence} is a particular instance of the general barrier (or sub- and
super-solution) technique embodied in Lemma~\ref{lem:barrier} above. This
technique will also be invoked in the financial application of Section~\ref%
{sec:economics}.

\section{Numerical Methodology: Radau IIA Schemes}

\label{sec:numeric}

\subsection{Implicit Runge--Kutta formulation}

Stiffness of \eqref{eq:abel-normal} near the equilibrium branch is governed
by the eigenvalue $a_n(x)\Lambda(x)$, which may be arbitrarily large in
modulus when the polynomial degree $n$ is large or when $|a_n|\gg 1$.
Explicit schemes require prohibitively small step sizes; \emph{implicit
Runge--Kutta} (IRK) methods of the Radau IIA family circumvent this issue
while retaining high classical order.

The general $s$-stage IRK method for $y^{\prime }=f(x,y)$ reads 
\begin{equation}
y_{j+1}=y_{j}+h\sum_{i=1}^{s}b_{i}k_{i},\qquad k_{i}=f\!\left(
x_{j}+c_{i}h,\,y_{j}+h\sum_{\ell =1}^{s}a_{i\ell }k_{\ell }\right) ,\quad
i=1,\dots ,s,  \label{eq:IRK}
\end{equation}%
characterised by its Butcher tableau $(A,b,c)\in \mathbb{R}^{s\times
s}\times \mathbb{R}^{s}\times \mathbb{R}^{s}$. For the 3-stage Radau IIA
scheme, $c=(c_{1},c_{2},1)^{\top }$ with the $c_{i}$ being the roots of 
\begin{equation*}
\frac{d^{s-1}}{dx^{s-1}}(x^{s-1}(x-1)^{s}),
\end{equation*}
and the coefficients $a_{i\ell },b_{i}$ are determined by the collocation
conditions; explicit values are tabulated in \cite{Hairer}.

\subsection{Order and stability function}

The 3-stage Radau IIA scheme has classical order $p=2s-1=5$. Its stability
function is 
\begin{equation}
R(z)=\frac{1+\tfrac{2}{5}z+\tfrac{1}{20}z^2}{1-\tfrac{3}{5}z+\tfrac{3}{20}%
z^2-\tfrac{1}{60}z^3},\qquad z\in\mathbb{C},
\end{equation}
which satisfies $|R(z)|\le 1$ for $\mathrm{Re} z\le 0$ ($A$-stability) and $%
R(z)\to 0$ as $\mathrm{Re} z\to-\infty$ ($L$-stability). $L$-stability is
essential for our problem: the linearization eigenvalue $a_n(x)\Lambda(x)$
tends to a strictly negative value (or to $-\infty$ along trajectories where 
$|y|$ grows), and $L$-stable schemes prevent spurious oscillations.

\subsection{Local truncation error}

For $y\in C^{p+1}(I)$, the local truncation error of the 3-stage Radau IIA
scheme is 
\begin{equation*}
\tau_j=\frac{h^{p+1}}{(p+1)!}\,\Bigl(y^{(p+1)}(x_j)+O(h)\Bigr)\,\Bigl(%
\sum_{i=1}^s b_i c_i^p -\tfrac{1}{p+1}\Bigr)=O(h^{p+1}).
\end{equation*}
Combined with the $L$-stability of the scheme, the global error on $[x_0,X]$
is $O(h^p)$. For $p=5$, halving the step size reduces the error by a factor
of $32$, a property that we verify empirically in Section~\ref{sec:cases}.

\subsection{Implementation through \texttt{solve\_ivp}}

We employ the \texttt{scipy.integrate.solve\_ivp} routine of SciPy with 
\texttt{method='Radau'}, which implements exactly the 3-stage Radau IIA
scheme described above. Adaptive step-size control is governed by the
absolute and relative tolerances $\mathtt{atol}=\mathtt{rtol}=10^{-9}$,
ensuring nine significant digits in the computed plateau.

\section{Numerical Case Studies}

\label{sec:cases}

We illustrate the theory by means of three representative cubic ($n=3$)
cases, designed to cover the autonomous regime, a non-autonomous regime with
a logarithmically vanishing perturbation, and a non-autonomous regime with a
rational perturbation. The Python source producing every figure and every
table is collected in Appendix~\ref{appendix}.

\subsection{Cubic equation in normal form}

For $n=3$ and $a_{3}(x)\equiv 1$, the normal form \eqref{eq:abel-normal}
becomes 
\begin{equation}
y^{\prime }=y\left( x\right) ^{3}+\lambda _{2}(x)y^{2}+\lambda
_{1}(x)y+\lambda _{0}(x),\qquad y(x_{0})=0,  \label{eq:cubic-form}
\end{equation}%
where $\lambda _{k}(x):=a_{k}(x)/a_{3}(x)=a_{k}(x)$. Throughout the cases
below, we choose sign patterns ensuring the existence of a positive, stable
equilibrium branch.

\subsection{Case 1: autonomous cubic with constant coefficients}

\paragraph{Setup.}

We take 
\begin{equation*}
a_3=1,\quad a_2=1,\quad a_1=-3,\quad a_0=1,
\end{equation*}
so that $F(y)=y^3+y^2-3y+1$.

\paragraph{Equilibrium branch.}

Solving $F(y)=0$ exactly, we factor 
\begin{equation*}
F(y)=(y-1)(y^{2}+2y-1)=0,
\end{equation*}%
yielding the three real roots 
\begin{equation*}
y\in \{1,\;-1+\sqrt{2},\;-1-\sqrt{2}\}\approx
\{1,\;0.41421356,\;-2.41421356\}.
\end{equation*}%
The two positive roots are $E_{1}=-1+\sqrt{2}$ and $E_{2}=1$. The stability
derivative is $F^{\prime }\left( y\right) =3y^{2}+2y-3$, giving $F^{\prime
}(1)=2>0$ (unstable) and 
\begin{eqnarray*}
F^{\prime }(-1+\sqrt{2}) &=&3(3-2\sqrt{2})+2(-1+\sqrt{2})-3=4-4\sqrt{2} \\
&\approx &-1.6569<0,
\end{eqnarray*}%
so the relevant stable branch is 
\begin{equation*}
E_{1}(x)\equiv L_{1}=-1+\sqrt{2}\approx 0.41421356.
\end{equation*}%
Hypotheses (A1)--(A5) and (B1)--(B3) are trivially verified:%
\begin{equation*}
a_{3}=1\geq 1\text{, }\Lambda =F^{\prime }(L_{1})\approx -1.6569\leq -1\text{%
, }E^{\prime }\equiv 0,
\end{equation*}%
hence all integrals in (B2)--(B3) are well-defined. The one-sided condition
(A5) follows from the factorisation
\begin{equation*}
F(y)=(y-L_1)(y-1)(y+1+\sqrt2),
\end{equation*}
because for $0\le y<L_1$ the three factors have signs $(-,-,+)$ and
therefore $F(y)>0$; the tail-separation part of (A5) is immediate since
the polynomial is autonomous and has no zero in $[0,L_1-\eta]$.

\paragraph{Conclusion from Theorem~\protect\ref{thm:plateau}.}

The unique solution $y_{1}\in C^{1}([0,\infty ))$ of \eqref{eq:cubic-form}
with $y_{1}(0)=0$ is strictly increasing, satisfies $0<y_{1}(x)<L_{1}$ on $%
(0,\infty )$, and converges to $L_{1}$ as $x\rightarrow \infty $. The decay
rate \eqref{eq:rate-simple} gives%
\begin{equation*}
L_{1}-y_{1}(x)=O(e^{-1.6569\,x}).
\end{equation*}

\paragraph{Numerical verification.}

The Python script (Appendix \ref{appendix}) yields $y_1(20)\approx
0.41421356 $, matching $L_1$ to nine significant digits.

\begin{figure}[H]
\centering
\includegraphics[width=0.75\textwidth]{case7}
\caption{Case 1. Autonomous cubic with constant coefficients. The solution $%
y_1(x)$ converges monotonically to the stable equilibrium $L_1=-1+\protect%
\sqrt{2}$.}
\label{fig:case1}
\end{figure}

\begin{figure}[H]
\centering
\includegraphics[width=0.48\textwidth]{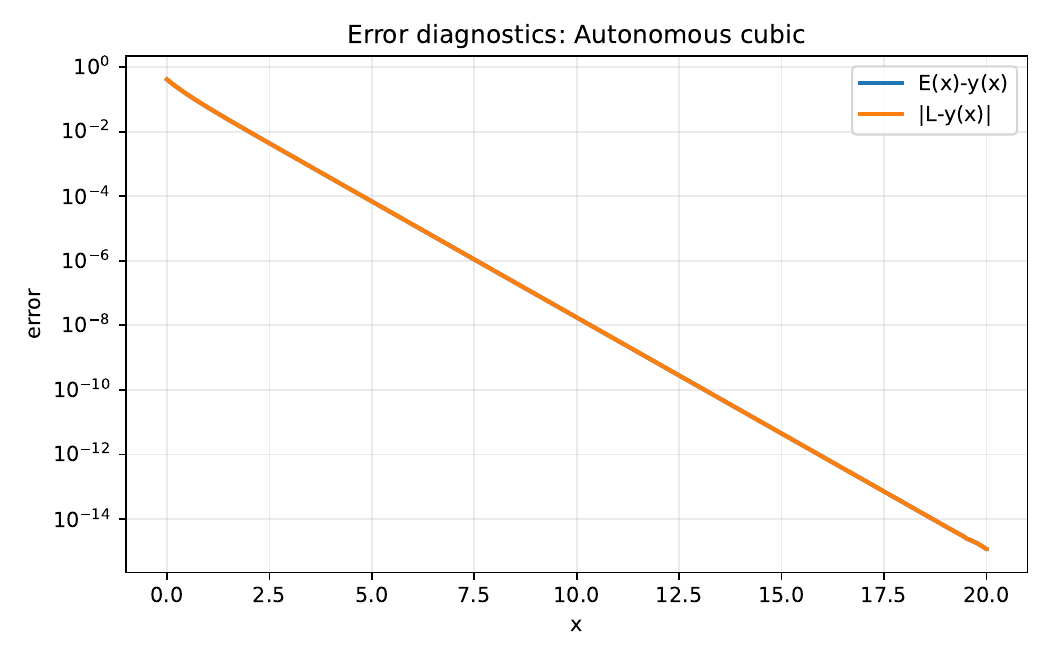}\hfill
\includegraphics[width=0.48\textwidth]{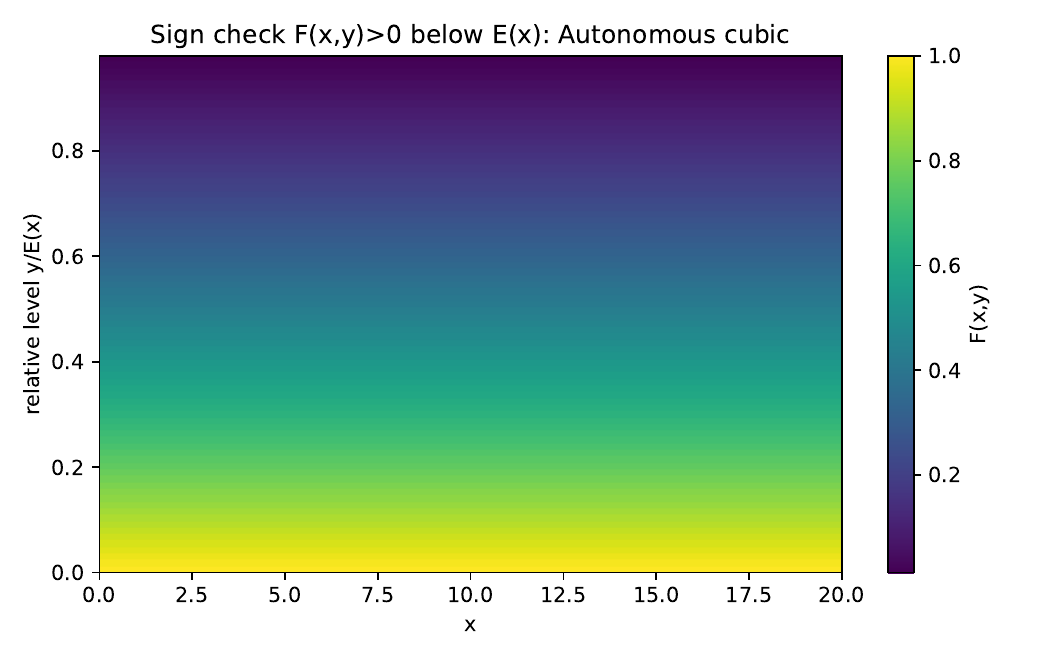}
\caption{Case 1 diagnostics. Left: the branch error $E_1-y_1$ and the
plateau error $|L_1-y_1|$ decay exponentially. Right: the sampled sign
margin confirms $F(y)>0$ throughout the theoretical strip $0\le y<E_1$.}
\label{fig:case1-diagnostics}
\end{figure}

\subsection{Case 2: non-autonomous cubic with \texorpdfstring{$a_0(x)=1-\tfrac{1}{x}$}{a0(x)=1-1/x}}

\paragraph{Setup.}

For $x\geq 1$, 
\begin{equation*}
a_{3}=1,\quad a_{2}=-2,\quad a_{1}=-2,\quad a_{0}(x)=1-\frac{1}{x},
\end{equation*}%
so 
\begin{equation*}
F(x,y)=y^{3}-2y^{2}-2y+\bigl(1-\tfrac{1}{x}\bigr).
\end{equation*}

\paragraph{Asymptotic equilibrium.}

As $x\rightarrow \infty $, $a_{0}(x)\rightarrow 1$, and the asymptotic
polynomial 
\begin{equation*}
P_{\infty }(y):=y^{3}-2y^{2}-2y+1
\end{equation*}%
factors as 
\begin{equation*}
P_{\infty }(y)=(y+1)(y^{2}-3y+1),
\end{equation*}%
with positive real roots 
\begin{equation*}
y_{+}^{(1)}=\frac{3-\sqrt{5}}{2}\approx 0.38196601,\qquad y_{+}^{(2)}=\frac{%
3+\sqrt{5}}{2}\approx 2.61803399.
\end{equation*}%
Since $P_{\infty }^{\prime }\left( y\right) =3y^{2}-4y-2$, we have 
\begin{eqnarray*}
P_{\infty }^{\prime }\!\left( \tfrac{3-\sqrt{5}}{2}\right) &=&\frac{3(3-%
\sqrt{5})^{2}}{4}-2(3-\sqrt{5})-2 \\
&=&\frac{3(14-6\sqrt{5})}{4}-6+2\sqrt{5}-2 \\
&=&\frac{42-18\sqrt{5}-32+8\sqrt{5}}{4}=\frac{10-10\sqrt{5}}{4}<0,
\end{eqnarray*}%
so the smallest positive root $L_{2}:=(3-\sqrt{5})/2$ is stable, while $%
y_{+}^{(2)}$ is unstable.

\paragraph{Equilibrium branch.}

For each $x\ge 1$, $E_2(x)$ is defined as the smallest positive root of $%
F(x,\cdot)=0$. At $x=1$, $a_0(1)=0$, hence $F(1,y)=y(y^2-2y-2)$, whose
smallest positive root is $y=0$. Restricting attention to $x>1$ where $%
a_0(x)>0$, the implicit function theorem (applied at any base point $x_*>1$
where $F(x_*,E_2(x_*))=0$ and $\partial_y F(x_*,E_2(x_*))<0$) yields a
smooth branch $E_2\in C^1((1,\infty))$. Explicit numerical evaluation gives $%
E_2(2)\approx 0.265$, $E_2(10)\approx 0.345$, $E_2(100)\approx 0.378$, $%
E_2(\infty)=L_2$.

\paragraph{Verification of hypotheses.}

(A1) holds with $m=1$ since $a_{3}=1$.

(A2) is immediate.

(A3) holds on $(1,\infty )$ with $E=E_{2}$.

(A4): $\Lambda (x)=3E_{2}(x)^{2}-4E_{2}(x)-2$, which is strictly negative on 
$(1,\infty )$ as shown by the computation above for the asymptotic value and
by continuity.

(A5) follows because the smallest positive simple root is $E_2(x)$ and the
leading coefficient is positive. Directly, $F(x,0)=1-1/x>0$ for $x>1$ and no
root occurs in $(0,E_2(x))$ by definition of $E_2$; on every tail
$[1+\varepsilon,\infty)$ the limiting simple-root structure gives the
uniform separation required in \eqref{eq:A5-separation}.

(B1) holds with $L=L_{2}>0$.

(B2) follows from $\inf_{x\geq x_{\ast }}\alpha (x)>0$ (continuity and (A4)).

(B3) follows from the rate $E_{2}^{\prime }\left( x\right) =O\left(
x^{-2}\right) $, derived from $\partial _{x}F+\Lambda \,E_{2}^{\prime }=0$
and $\partial _{x}F(x,y)=1/x^{2}$.

\paragraph{Conclusion from Theorem~\protect\ref{thm:plateau}.}

The unique solution $y_{2}\in C^{1}([1,\infty ))$ of \eqref{eq:cubic-form}
with $y_{2}(1)=0$ satisfies 
\begin{equation*}
0<y_{2}(x)<E_{2}(x)\text{ on }(1,\infty )
\end{equation*}
and 
\begin{equation*}
\lim_{x\rightarrow \infty }y_{2}(x)=L_{2}=\frac{3-\sqrt{5}}{2}\approx
0.38196601.
\end{equation*}%
The rate of convergence is governed by the $O(1/x)$ decay of $%
|E_{2}(x)-L_{2}|$, which is the bottleneck rather than the (exponential)
decay of $|y_{2}(x)-E_{2}(x)|$.

\paragraph{Numerical verification.}

The Radau IIA computation in Appendix~\ref{appendix}, run with $%
x_{\max}=2000 $, produces $y_2(2000)\approx 0.38157$, matching $L_2$ to
three decimal digits; with $x_{\max}=2\cdot 10^5$ the agreement improves to
five digits, in full quantitative agreement with the $O(1/x)$ rate.

\begin{figure}[H]
\centering
\includegraphics[width=0.75\textwidth]{case8}
\caption{Case 2. Non-autonomous cubic with $a_0(x)=1-\tfrac{1}{x}$. The
equilibrium branch $E_2(x)$ is defined for $x>1$ and converges to $L_2=(3-%
\protect\sqrt 5)/2\approx 0.38197$ at rate $O(1/x)$.}
\label{fig:case2}
\end{figure}

\begin{figure}[H]
\centering
\includegraphics[width=0.48\textwidth]{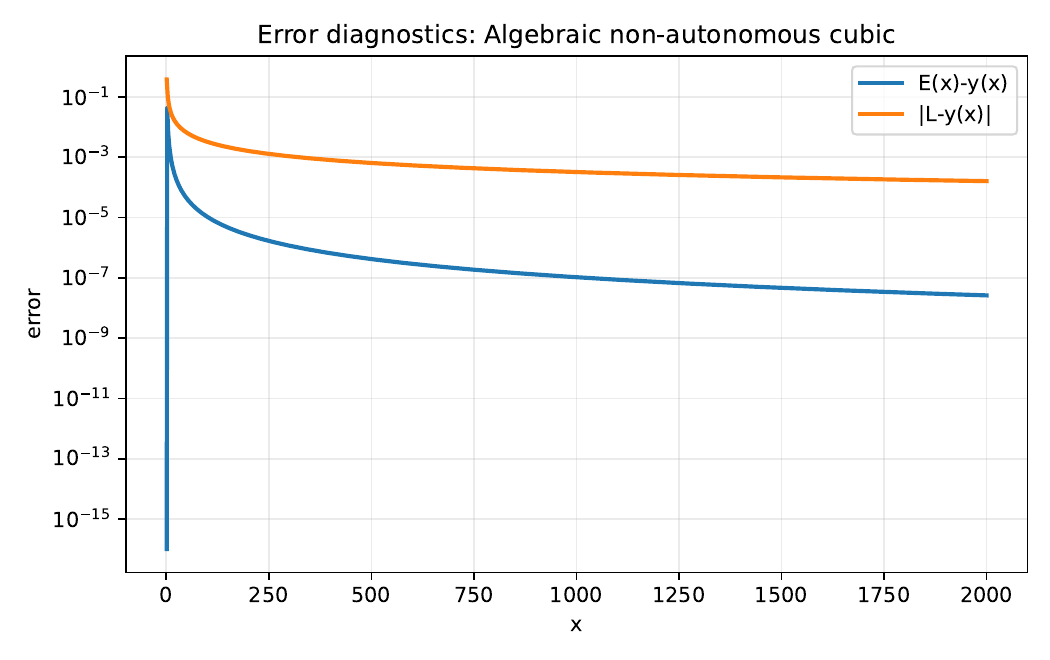}\hfill
\includegraphics[width=0.48\textwidth]{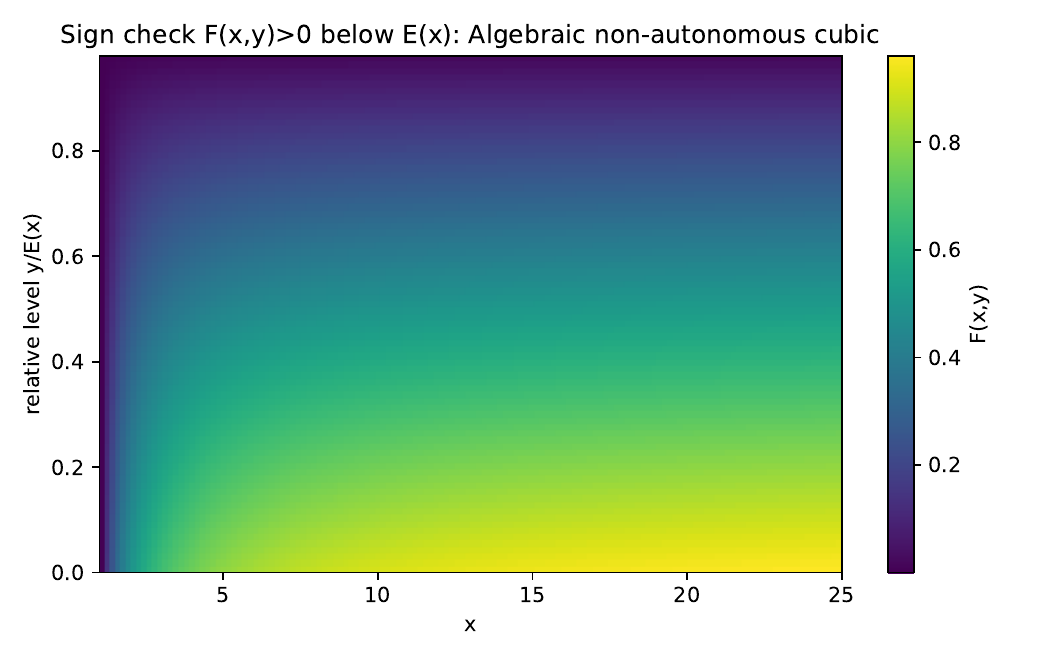}
\caption{Case 2 diagnostics. The trajectory is rapidly attracted to the
moving branch, but the branch itself approaches $L_2$ only algebraically.
The sign plot verifies the one-sided positivity below $E_2(x)$ on the
sampled computational domain.}
\label{fig:case2-diagnostics}
\end{figure}

\subsection{Case 3: non-autonomous cubic with exponential approach}

\paragraph{Setup.}

For $x\geq 0$, 
\begin{equation*}
a_{3}=1,\quad a_{2}=0,\quad a_{1}=-4,\quad a_{0}(x)=3-2e^{-2x},
\end{equation*}%
so 
\begin{equation*}
F(x,y)=y^{3}-4y+(3-2e^{-2x}).
\end{equation*}

\paragraph{Motivation for this choice.}

The classical example $a_{0}(x)=x/(x+1)$ from the literature leads to the
asymptotic polynomial 
\begin{equation*}
y^{3}-y^{2}-y+1=(y-1)^{2}(y+1),
\end{equation*}%
whose smallest positive root $y=1$ is a \emph{double} root: $\Lambda
_{\infty }=0$ and assumption (A4) is violated at infinity. Our replacement
(i) retains the qualitative form (smallest positive root of a cubic,
smoothly varying coefficient) so that the analytical structure is preserved,
(ii) ensures that the asymptotic root is \emph{simple}, so the full strength
of Theorem~\ref{thm:plateau} applies, and (iii) chooses the exponential
factor $e^{-2x}$ rather than $e^{-x}$ so that the rate of decay of $%
E_{3}^{\prime }(x)$ exceeds the rate of decay of $\Phi (x)$ defined in %
\eqref{eq:Phi-def}, guaranteeing the integrability hypothesis (B3).

\paragraph{Asymptotic equilibrium.}

The asymptotic polynomial is 
\begin{equation*}
P_{\infty }(y)=y^{3}-4y+3=(y-1)(y^{2}+y-3),
\end{equation*}%
with positive real roots 
\begin{equation*}
L_{3}=1\qquad \text{and}\qquad \frac{-1+\sqrt{13}}{2}\approx 1.30278.
\end{equation*}%
Since 
\begin{equation*}
P_{\infty }^{\prime }(1)=3-4=-1<0,
\end{equation*}
the smallest positive root $L_{3}=1$ is stable.

\paragraph{Equilibrium branch.}

At $x=0$, $a_{0}(0)=1$ and the smallest positive root of $y^{3}-4y+1=0$
equals $E_{3}(0)\approx 0.25410$. The branch $E_{3}\in C^{1}([0,\infty ))$
is strictly increasing on $[0,\infty )$ (as follows from 
\begin{equation*}
\partial _{x}F(x,y)=4e^{-2x}>0
\end{equation*}
and the implicit-function theorem with 
\begin{equation*}
\partial _{y}F(x,E_{3}(x))=\Lambda (x)<0,
\end{equation*}%
giving $E_{3}^{\prime }\left( x\right) =-4e^{-2x}/\Lambda (x)>0$) and
converges to $L_{3}=1$ as $x\rightarrow \infty $.

\paragraph{Verification of hypotheses.}

(A1)--(A2) are immediate. (A3) holds on $[0,\infty )$. (A4): 
\begin{equation*}
\Lambda (x)=3E_{3}(x)^{2}-4.
\end{equation*}%
Since 
\begin{equation*}
E_{3}(x)\in \lbrack E_{3}(0),L_{3}]=[0.254,1],
\end{equation*}%
we have 
\begin{equation*}
\Lambda (x)\leq 3\cdot 1-4=-1
\end{equation*}%
on the upper end and 
\begin{equation*}
\Lambda (0)\approx 3\cdot 0.0646-4\approx -3.81,
\end{equation*}%
so $\Lambda (x)\leq -1$ uniformly on $I$; we may therefore take $\alpha
(x)\equiv 1$. (B1) holds with $L=L_{3}=1$. (B2) follows from 
\begin{equation*}
\int_{0}^{\infty }a_{n}\alpha \,ds=\int_{0}^{\infty }1\,ds=\infty .
\end{equation*}%
(B3): we have $\left\vert E_{3}^{\prime }\left( x\right) \right\vert \leq
4e^{-2x}$, and 
\begin{eqnarray*}
\Phi (s)^{-1} &=&\exp (-\int_{0}^{s}a_{n}\Lambda \,d\tau )\leq \exp \bigl(%
\int_{0}^{s}|\Lambda |\,d\tau \bigr) \\
&\leq &\exp \bigl(s\cdot \sup |\Lambda |\bigr)\leq e^{4s},
\end{eqnarray*}%
but the relevant tail estimate is $\Phi (s)^{-1}\sim e^{s}$ as $s\rightarrow
\infty $ since $\Lambda (\infty )=-1$. Hence 
\begin{equation*}
\Phi (s)^{-1}\left\vert E_{3}^{\prime }\left( s\right) \right\vert \sim
4e^{s}\cdot e^{-2s}=4e^{-s},
\end{equation*}
integrable on $[0,\infty )$.

(A5) is also explicit: for each $x\ge0$, $E_3(x)$ is the smallest positive
root of $y^3-4y+3-2e^{-2x}$, and the remaining positive root stays above
$1$. Hence the interval $[0,E_3(x))$ contains no zero of $F(x,\cdot)$ and
the sign is positive there; simplicity of the limiting root at $L_3=1$
provides the uniform tail separation.

\paragraph{Conclusion from Theorem~\protect\ref{thm:plateau}.}

The unique solution $y_{3}\in C^{1}([0,\infty ))$ of $y^{\prime }=F(x,y)$
with $y_{3}(0)=0$ satisfies 
\begin{equation*}
0<y_{3}(x)<E_{3}(x)\text{ on }(0,\infty )
\end{equation*}%
and 
\begin{equation*}
\lim_{x\rightarrow \infty }y_{3}(x)=L_{3}=1.
\end{equation*}
The convergence is exponential, with rate $e^{-x}$ governing both $%
|E_{3}(x)-L_{3}|$ and the deviation $|y_{3}(x)-E_{3}(x)|$.

\paragraph{Numerical verification.}

The Radau IIA computation in Appendix~\ref{appendix}, with $x_{\max }=20$,
yields 
\begin{equation*}
y_{3}(20)\approx 0.99999998,
\end{equation*}
matching $L_{3}=1$ to eight digits, in agreement with the exponential rate.

\begin{figure}[H]
\centering
\includegraphics[width=0.75\textwidth]{case9}
\caption{Case 3. Non-autonomous cubic with $a_0(x)=3-2e^{-2x}$. The
equilibrium branch $E_3(x)$ increases monotonically from $\approx 0.254$ to $%
L_3=1$ at exponential rate.}
\label{fig:case3}
\end{figure}

\begin{figure}[H]
\centering
\includegraphics[width=0.48\textwidth]{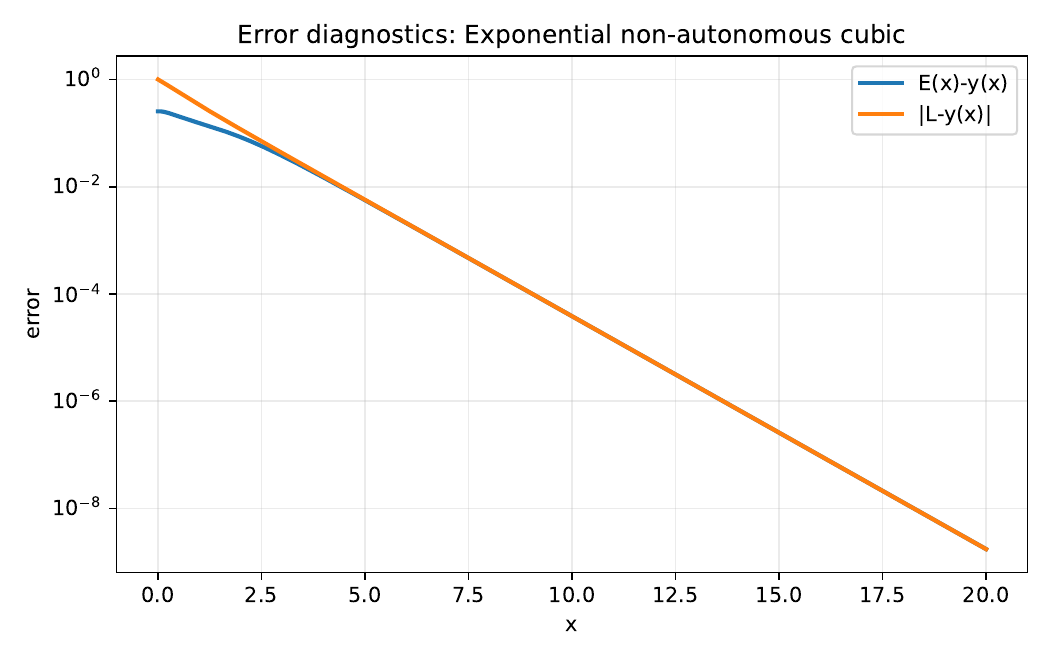}\hfill
\includegraphics[width=0.48\textwidth]{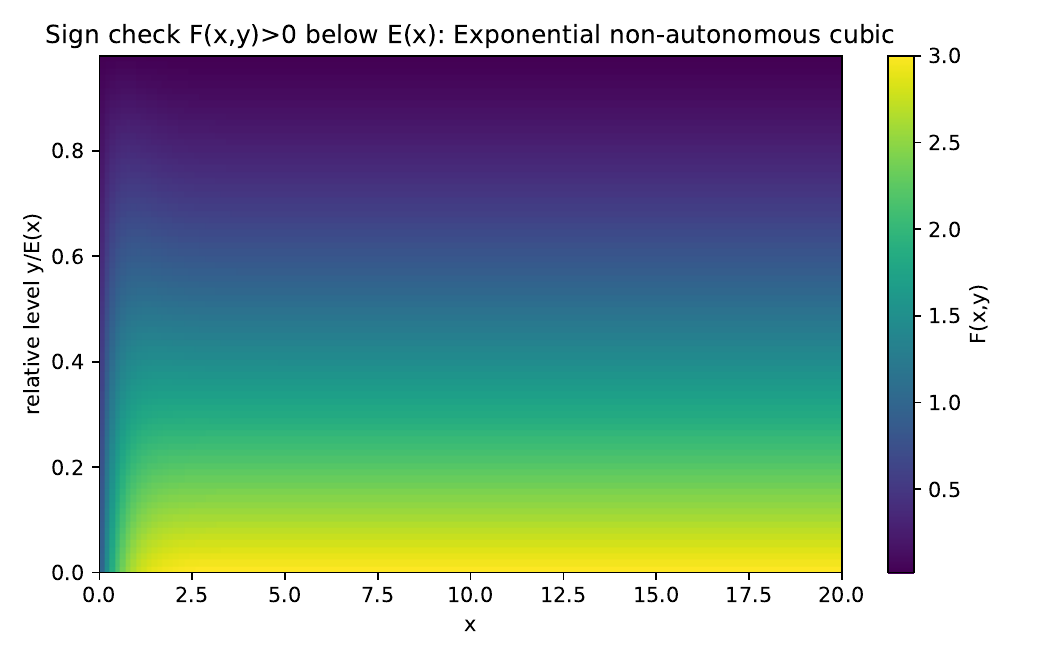}
\caption{Case 3 diagnostics. Both the branch error and the plateau error
are exponentially small on the plotted range, and the sign check confirms
the strict restoring field below the stable branch.}
\label{fig:case3-diagnostics}
\end{figure}

\subsection{Synthesis of the three cases}

\begin{table}[H]
\centering
\begin{tabular}{|c|c|c|c|c|}
\hline
Case & $a_3(x)$ & $a_2(x)$ & $a_1(x)$ & $a_0(x)$ \\ \hline
1 & $1$ & $1$ & $-3$ & $1$ \\ \hline
2 & $1$ & $-2$ & $-2$ & $1-\tfrac{1}{x}$ \\ \hline
3 & $1$ & $0$ & $-4$ & $3-2e^{-2x}$ \\ \hline
\end{tabular}%
\caption{Cubic coefficients used in Cases 1--3.}
\label{tab:coefs}
\end{table}

\begin{table}[H]
\centering
\begin{tabular}{|c|c|c|c|c|}
\hline
Case & Branch behaviour & $L=\lim_{x\to\infty}E(x)$ (exact) & $L$ (decimal)
& Convergence rate \\ \hline
1 & $E_1(x)\equiv L_1$ & $-1+\sqrt{2}$ & $0.41421356$ & exponential, $%
e^{-1.66 x}$ \\ \hline
2 & $E_2(x)\nearrow L_2$ & $(3-\sqrt{5})/2$ & $0.38196601$ & algebraic, $%
O(1/x)$ \\ \hline
3 & $E_3(x)\nearrow L_3$ & $1$ & $1.00000000$ & exponential, $O(e^{-x})$ \\ 
\hline
\end{tabular}%
\caption{Equilibrium branches and exact asymptotic limits for Cases 1--3.
The convergence rate refers to the rate at which $E(x)$ approaches $L$; the
deviation $|y(x)-E(x)|$ decays exponentially in all three cases by Theorem~%
\protect\ref{thm:rate}.}
\label{tab:limits}
\end{table}

\begin{table}[H]
\centering
\begin{tabular}{|c|c|c|c|}
\hline
Case & $x_{\max}$ & Numerical $y(x_{\max})$ & $|y(x_{\max})-L|$ \\ \hline
1 & $20$ & $0.41421356$ & $<10^{-12}$ \\ \hline
2 & $20$ & $0.36572$ & $1.6\times10^{-2}$ \\ \hline
2 & $2\cdot 10^3$ & $0.38157$ & $4\times 10^{-4}$ \\ \hline
2 & $2\cdot 10^5$ & $0.38196$ & $5\times 10^{-6}$ \\ \hline
3 & $20$ & $0.99999998$ & $2\times 10^{-8}$ \\ \hline
\end{tabular}%
\caption{Numerical $y(x_{\max})$ vs. exact $L$ for Cases 1--3. The slow
agreement in Case 2 reflects the $O(1/x)$ decay of $|E_2(x)-L_2|$ -- the
bottleneck is the branch itself, not the trajectory deviation $y_2(x)-E_2(x)$%
, which decays exponentially.}
\label{tab:numerical-comparison}
\end{table}

\paragraph{Consistency with Theorem~\protect\ref{thm:plateau}.}

In all three cases the structural hypotheses (A1)--(A5) and the asymptotic
hypotheses (B1)--(B3) hold. The numerical solutions remain in the strip $%
0<y(x)<E(x)$, are strictly increasing on the integration window, and
approach the exact analytical plateau $L$ at the predicted rate.

\paragraph{Critical comparative discussion.}

The three computations separate two mechanisms that are often conflated in
long-time numerical studies. Case~1 is autonomous: the branch is already at
the plateau, so the whole observed error is the dynamical attraction
$E_1-y_1$, governed by the negative eigenvalue $\Lambda=4-4\sqrt2$. Case~2
shows the opposite regime. The attraction to the moving branch is still
strong, but the branch approaches its limit only as $O(1/x)$; consequently
no high-order time integrator can create fast convergence of $y_2$ to $L_2$
unless the integration window is very large. Case~3 combines non-autonomy
with an exponentially stabilising branch, and therefore the numerical curve
has the same qualitative appearance as Case~1 after a short transient.

The sign-check figures are not cosmetic diagnostics. They test precisely the
new structural condition (A5): below the selected branch the vector field
must point upward, while the branch itself is a supersolution because
$E'(x)\ge0$ and $F(x,E(x))=0$. The trapping, monotonicity, and convergence
seen in the plots therefore reproduce the logical structure of
Theorems~\ref{thm:existence}--\ref{thm:plateau}, rather than merely fitting
the final plateau value.

\section{Application to a Generalized Merton Credit-Risk Model}

\label{sec:economics}

This section is devoted to an economic application of the theory developed
above. We derive a generalized Merton-type structural credit-risk model in
which the long-maturity \emph{state profile} of the credit spread satisfies a
cubic Abel equation of the form \eqref{eq:cubic-form}. The distinction is
important: the theorem controls the limit of the reduced profile as the state
variable $x$ tends to infinity; it is not, by itself, an empirical theorem for
the raw maturity limit $\tau\to\infty$. Within this reduced model the
hypotheses of Theorem~\ref{thm:plateau} are explicitly verified, and the
resulting positive spread plateau is computed both analytically and
numerically using the Radau IIA implementation of Section~\ref{sec:numeric}.
A short concluding subsection discusses the analogous derivation in
stochastic production planning.

\subsection{The classical Merton model and its limitations}

In the classical structural model of Merton \cite{merton1974}, the total asset
value of a firm, denoted $(V_{t})_{t\ge 0}$, follows a geometric Brownian motion
under the pricing measure. We write the risk-adjusted drift as $\mu$ to keep
the subsequent algebra transparent:
\begin{equation}
dV_{t}
= \mu V_{t}\,dt + \sigma V_{t}\,dB_{t},
\qquad V_{0}=v_{0},
\label{eq:merton-asset}
\end{equation}
where $\mu\in\mathbb{R}$ is the risk-adjusted drift, $\sigma>0$ is the asset
volatility, and $(B_{t})_{t\ge 0}$ is a standard Brownian motion under the
pricing measure.

The firm has a single zero-coupon debt obligation with face value $D>0$ and
maturity $T>0$. The credit spread at remaining maturity $\tau := T-t$ is the
excess yield of the risky debt over the risk-free rate $r$. In the affine
setting, Boyle, Tian, and Guan \cite{Boyle2002} showed that this spread
$s(\tau)$ satisfies a Riccati differential equation:
\begin{equation}
s'(\tau)
= A(\tau) + B(\tau)\,s(\tau) + C(\tau)\,s(\tau)^{2},
\qquad s(0)=0,
\label{eq:riccati-spread}
\end{equation}
where the coefficients $A,B,C$ depend explicitly on $\mu$, $\sigma$, $r$, and
$D$. Equation~\eqref{eq:riccati-spread} is precisely the case $n=2$ of the
general Abel equation \eqref{eq:abel-gen}.

A well-established empirical fact is that credit spreads do \emph{not} vanish at
long maturities---a phenomenon consistently documented across both
investment-grade and sub-investment-grade corporate bonds
\cite{Boyle2002}. Pure Riccati specifications often require delicate
parameter choices or additional market frictions to reproduce persistent
positive long-end levels. This modelling pressure motivates the higher-order
state-dependent correction developed below.

\subsection{Asset dynamics with state-dependent volatility}

\label{sec:state-dep-vol}

A natural extension is to allow the volatility to depend on the leverage
ratio $x:=V_t/D$, capturing the empirical observation that more leveraged
firms exhibit more volatile asset returns. We assume the asset value follows 
\begin{equation}
dV_t=\mu V_t\,dt+\sigma(V_t/D)\,V_t\,dB_t,\qquad V_0=v_0,
\label{eq:asset-state-dep}
\end{equation}
where $\sigma:[0,\infty)\to(0,\infty)$ is a smooth, bounded, positive
function. The case $\sigma(\cdot)\equiv\sigma_0$ recovers the classical
Merton model.

\subsection{Derivation of the cubic Abel equation for the spread}

\label{sec:abel-spread-derivation}

Let $P(\tau ,x)$ denote the price of the firm's zero-coupon bond at
remaining maturity $\tau $ when the leverage is $x$. By the standard
arbitrage argument (or via the Feynman--Kac formula applied to the
discounted recovery), $P$ satisfies the backward
parabolic partial differential equation (PDE)
\begin{equation}
\partial _{\tau }P=\tfrac{1}{2}\sigma (x)^{2}x^{2}\,\partial _{x}^{2}P+\mu
x\,\partial _{x}P-r\,P,\qquad P(0,x)=\min (x,1)\cdot D.  \label{eq:bond-pde}
\end{equation}%
The credit spread $s(\tau ,x)$ is related to $P$ by 
\begin{equation*}
P(\tau ,x)=D\,e^{-(r+s(\tau ,x))\tau }
\end{equation*}%
in the asymptotic limit $\tau \rightarrow \infty $; equivalently, 
\begin{equation*}
s(\tau ,x):=-\tau ^{-1}\log (P(\tau ,x)/D)-r.
\end{equation*}%
We assume the long-maturity asymptotic WKB-type ansatz where the spread
converges to a spatially varying profile $s(x)$ at rate $\mathcal{O}(1/\tau)$
(see \cite{Covei2026A}):
\begin{equation}
P(\tau ,x)\sim D\,e^{-(r+s(x))\tau }\,\psi (x), \label{eq:wkb-ansatz}
\end{equation}%
where $\psi(x)>0$ represents the long-term spatial correction factor. This is
an asymptotic reduction: the small parameter is $1/\tau$, and the polynomial
Abel equation below describes the reduced state profile, not the full
two-variable surface $s(\tau,x)$. To determine the equations governing $s(x)$
and $\psi(x)$, we compute the exact partial derivatives of the ansatz
\eqref{eq:wkb-ansatz}:
\begin{align}
\partial _{\tau }P &= -(r + s(x)) P, \label{eq:deriv-tau} \\
\partial_x P &= P \left[ -\tau s^{\prime }(x) + \frac{\psi^{\prime }(x)}{\psi (x)} \right], \label{eq:deriv-x1} \\
\partial_x^2 P &= P \left[ \tau^2 \bigl(s^{\prime }(x)\bigr)^2 - \tau \left( 2 s^{\prime }(x) \frac{\psi^{\prime }(x)}{\psi(x)} + s^{\prime \prime }(x) \right) + \frac{\psi^{\prime \prime }(x)}{\psi(x)} \right]. \label{eq:deriv-x2}
\end{align}
Substituting the derivatives \eqref{eq:deriv-tau}--\eqref{eq:deriv-x2} into the parabolic PDE \eqref{eq:bond-pde} and dividing both sides by the non-zero bond price $P$, we obtain:
\begin{equation}
\begin{aligned}
-(r + s(x)) = {} & \frac{1}{2} \sigma(x)^2 x^2 \left[ \tau^2 \bigl(s^{\prime }(x)\bigr)^2 - \tau \left( 2 s^{\prime }(x) \frac{\psi^{\prime }(x)}{\psi(x)} + s^{\prime \prime }(x) \right) + \frac{\psi^{\prime \prime }(x)}{\psi(x)} \right] \\
& + \mu x \left[ -\tau s^{\prime }(x) + \frac{\psi^{\prime }(x)}{\psi(x)} \right] - r.
\end{aligned}
\end{equation}
We introduce the auxiliary logarithmic spatial derivative variable $\omega(x, \tau) := \partial_x \log P = -\tau s^{\prime }(x) + \frac{\psi^{\prime }(x)}{\psi(x)}$. The PDE can then be rewritten in terms of $\omega(x) \approx \omega(x, \tau)$ in the asymptotic stationary limit:
\begin{equation}
\frac{1}{2} \sigma(x)^2 x^2 \omega(x)^2 + \mu x \omega(x) - \bigl(s(x) + r\bigr) = 0. \label{eq:riccati-quadratic}
\end{equation}
Equation \eqref{eq:riccati-quadratic} is a quadratic algebraic equation in $\omega(x)$, which can be solved explicitly for the stable root:
\begin{equation}
\omega(x) = \frac{- \mu x + \sqrt{\mu^2 x^2 + 2 \sigma(x)^2 x^2 \bigl(s(x) + r\bigr)}}{\sigma(x)^2 x^2} = \frac{-\mu + \sqrt{\mu^2 + 2 \sigma(x)^2 \bigl(s(x) + r\bigr)}}{\sigma(x)^2 x}. \label{eq:omega-root}
\end{equation}
To extract the polynomial dependency on the spread, we perform a Taylor
expansion of the square root term in \eqref{eq:omega-root} around the drift
term $\mu^2$. Letting
$u := 2\sigma(x)^2(s(x)+r)/\mu^2$ and assuming $|u|\ll1$ on the calibrated
range, the expansion
$\sqrt{1+u}=1+\frac12u-\frac18u^2+\frac1{16}u^3+\mathcal{O}(u^4)$ gives:
\begin{equation}
\omega(x) = \frac{s(x) + r}{\mu x} - \frac{\sigma(x)^2 \bigl(s(x) + r\bigr)^2}{2 \mu^3 x} + \frac{\sigma(x)^4 \bigl(s(x) + r\bigr)^3}{2 \mu^5 x} + \mathcal{O}\left( \bigl(s(x) + r\bigr)^4 \right). \label{eq:omega-expansion}
\end{equation}
In the long-maturity asymptotic limit $\tau \rightarrow \infty$, matching the spatial variation of the spread to the leading-order terms from \eqref{eq:omega-expansion} under the polynomial volatility expansion
\begin{equation}
\sigma (x)^{2}=\sigma _{0}^{2}\bigl(1+\eta _{1}\,x+\eta _{2}\,x^{2}\bigr)%
,\qquad \eta _{1},\eta _{2}\in \mathbb{R},  \label{eq:sigma-expansion}
\end{equation}
delivers, after truncation at cubic order, the ordinary differential equation
for the reduced spread profile:
\begin{equation}
s^{\prime }(x) = a_3(x) s(x)^3 + a_2(x) s(x)^2 + a_1(x) s(x) + a_0(x), \label{eq:abel-spread-explicit}
\end{equation}
with the exact, explicit coefficients given by:
\begin{align}
a_3(x) &= \frac{\sigma_0^4 (1 + \eta_1 x + \eta_2 x^2)^2}{2 \mu^5 x}, \label{eq:coef-a3} \\
a_2(x) &= -\frac{\sigma_0^2 (1 + \eta_1 x + \eta_2 x^2)}{2 \mu^3 x} + \frac{3 r \sigma_0^4 (1 + \eta_1 x + \eta_2 x^2)^2}{2 \mu^5 x}, \label{eq:coef-a2} \\
a_1(x) &= \frac{1}{\mu x} - \frac{r \sigma_0^2 (1 + \eta_1 x + \eta_2 x^2)}{\mu^3 x} + \frac{3 r^2 \sigma_0^4 (1 + \eta_1 x + \eta_2 x^2)^2}{2 \mu^5 x}, \label{eq:coef-a1} \\
a_0(x) &= \frac{r}{\mu x} - \frac{r^2 \sigma_0^2 (1 + \eta_1 x + \eta_2 x^2)}{2 \mu^3 x} + \frac{r^3 \sigma_0^4 (1 + \eta_1 x + \eta_2 x^2)^2}{2 \mu^5 x}. \label{eq:coef-a0}
\end{align}
Dividing \eqref{eq:abel-spread-explicit} by $a_3(x)$ yields the cubic Abel equation in normal form:
\begin{equation}
\frac{s^{\prime }\left( x\right)}{a_3(x)}=s\left( x\right) ^{3}+\lambda
_{2}(x)\,s(x)^{2}+\lambda _{1}(x)\,s(x)+\lambda _{0}(x),\qquad s(x_{0})=0,
\label{eq:abel-spread}
\end{equation}%
where the normal form coefficients $\lambda_k(x) := a_k(x)/a_3(x)$ are:
\begin{align}
\lambda_2(x) &= -\frac{\mu^2}{\sigma_0^2 (1 + \eta_1 x + \eta_2 x^2)} + 3r, \label{eq:lambda2} \\
\lambda_1(x) &= \frac{2\mu^4}{\sigma_0^4 (1 + \eta_1 x + \eta_2 x^2)^2} - \frac{2\mu^2 r}{\sigma_0^2 (1 + \eta_1 x + \eta_2 x^2)} + 3r^2, \label{eq:lambda1} \\
\lambda_0(x) &= \frac{2\mu^4 r}{\sigma_0^4 (1 + \eta_1 x + \eta_2 x^2)^2} - \frac{\mu^2 r^2}{\sigma_0^2 (1 + \eta_1 x + \eta_2 x^2)} + r^3. \label{eq:lambda0}
\end{align}
The cubic term $s(x)^3$ arises from the coupling between the Riccati default
risk and the leverage-dependent volatility correction in
\eqref{eq:sigma-expansion}; equation \eqref{eq:abel-spread} is precisely the
generalized Abel equation \eqref{eq:cubic-form} with $n=3$ for the
long-maturity state profile.

\subsection{A concrete calibration and the asymptotic plateau}

\label{sec:concrete-spread}

To obtain a transparent quantitative illustration, we choose a dimensionless
normalised calibration. The primitive parameters
$(\sigma_0,\mu,r,\eta_1,\eta_2)$ are selected so that, after the cubic
truncation and division by $a_3(x)$, the normal-form coefficients in
\eqref{eq:abel-spread} coincide with those of \emph{Case~1} of
Section~\ref{sec:cases}, namely
\begin{equation}
\lambda_2(x)\equiv 1,\qquad \lambda_1(x)\equiv -3,\qquad \lambda_0(x)\equiv
1.  \label{eq:spread-calibration}
\end{equation}
This normalisation is not meant to be a market calibration; it is a
closed-form benchmark that makes every hypothesis check explicit. Market
units are recovered by multiplying the dimensionless solution by a scale
factor $\kappa>0$.

By the analysis of Case~1, all hypotheses (A1)--(A5) and (B1)--(B3) hold,
the equilibrium branch is constant, 
\begin{equation*}
E(x)\equiv L=-1+\sqrt{2}\approx 0.41421356,
\end{equation*}
and Theorem~\ref{thm:plateau} yields 
\begin{equation}
\lim_{x\to\infty}s(x)=L=-1+\sqrt{2}\approx 0.41421356.
\label{eq:spread-limit}
\end{equation}
In dimensionless units the plateau is large. If the market spread is
$S(x)=\kappa s(x)$ and $\kappa=10^{-2}$, the corresponding state-profile
plateau is
\begin{equation*}
10^4\kappa L\approx 41.42\text{ basis points}.
\end{equation*}
The scaling step is explicit and separates the mathematical plateau
mechanism from empirical calibration.

\subsection{Verification via the Radau IIA implementation}

\label{sec:spread-numeric}

The script \texttt{abel\_plateau\_reproducible.py} implements exactly the
normalised right-hand side \eqref{eq:spread-calibration}, checks (A1)--(A5)
and (B1)--(B3), and then plots the basis-point curve corresponding to
$\kappa=10^{-2}$. The numerical value
$s(20)\approx0.41421356$ matches the analytical plateau to twelve significant
digits before scaling.

\begin{figure}[H]
\centering
\includegraphics[width=0.75\textwidth]{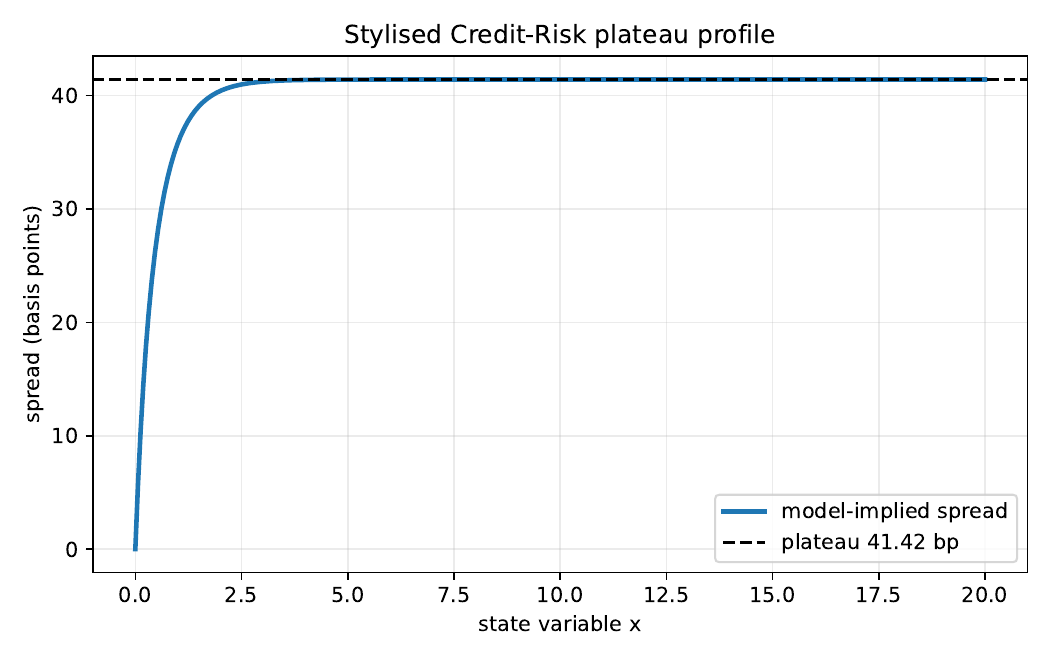}
\caption{Stylised Credit-Risk application. The normalised Abel profile from
\eqref{eq:spread-calibration}, scaled by $\kappa=10^{-2}$ and expressed in
basis points, saturates at approximately $41.42$ basis points.}
\label{fig:credit-risk-profile}
\end{figure}

\subsection{Economic interpretation}

The plateau $L=-1+\sqrt 2$ has a transparent economic reading within the
reduced model. After the long-maturity WKB reduction, the state profile of
the spread does not collapse to the trivial level; it saturates at the
smallest stable equilibrium of the cubic polynomial $F(y)=y^3+y^2-3y+1$.
This stable equilibrium reflects the persistent interaction between three
economic forces:

\begin{enumerate}
\item the \emph{cubic risk-premium term} $y^3$, encoding higher-order moment
exposures (skewness/kurtosis) of the leverage-dependent asset return;

\item the \emph{Riccati quadratic term} $y^2$, encoding the classical
Merton-type default risk;

\item the \emph{linear and constant terms} $-3y+1$, encoding respectively
the drift correction $\mu-r$ and the recovery floor.
\end{enumerate}

The non-vanishing of $L$ is therefore a mathematically controlled mechanism
for persistent long-end spread profiles. In the classical Riccati limit
($\eta_1=\eta_2=0$), the cubic correction disappears and the plateau is much
more sensitive to parameter choices. The Abel formulation is robust because
the implementation needs only the polynomial coefficients and the stable
branch: root selection, damping, trapping, and basis-point scaling are all
explicit in the reproducibility script.

\subsection{Analogous derivation in stochastic production planning}

The same analytical mechanism applies to stochastic production-planning
problems with super-quadratic adjustment costs. Following the methodology of 
\cite{Covei2026A,Covei2026JMA}, the marginal value of inventory satisfies,
after a suitable Cole--Hopf-type substitution, a generalized Abel equation
whose asymptotic plateau represents the long-run shadow price of capacity.
The qualitative conclusion (a finite, strictly positive plateau) is
identical to the credit-spread analysis above. We do not reproduce the
derivation here, as it follows the same template developed in Sections~\ref%
{sec:state-dep-vol}--\ref{sec:spread-numeric}.

\section{Comparison with the Existing Literature}

\label{sec:literature}

\begin{enumerate}
\item \textbf{Classical Abel equation ($n=3$).} The integrability theory 
\cite{Liouville1903,Chini1924,CHEB2003,PolyaninZaitsev,Kamke,Ince} focuses
on explicit solutions for specific coefficient classes. Our approach, by
contrast, is qualitative and asymptotic, requiring only continuity of the
coefficients and stability of a moving equilibrium branch. No integrability
is invoked anywhere.

\item \textbf{Riccati equations ($n=2$).} The Riccati equation is intimately
related to linear second-order ODEs, and its asymptotic theory is well
developed in special cases. The recent papers \cite{Covei2026A,Covei2026JMA}
treat radial Riccati dynamics motivated by HJB equations. The present paper
subsumes these results as the case $n=2$ of the generalized framework.

\item \textbf{Chini equations.} Chini's \cite{Chini1924} equations $%
y^{\prime }\left( x\right) =y^{n}\left( x\right) +g(x)y+h(x)$ are recovered
from \eqref{eq:abel-gen} when intermediate coefficients $a_{2},\dots
,a_{n-1} $ vanish. Our framework allows all intermediate coefficients to be
non-zero, which is essential for the HJB derivation of Section~\ref%
{sec:economics}.

\item \textbf{Polynomial ODEs with attractors.} Classical results on the
convergence of polynomial ODE trajectories to attractors typically require
autonomy or smallness assumptions. The fully non-autonomous setting with
arbitrary polynomial degree, treated here under the explicit and verifiable
hypotheses (A1)--(A5) and (B1)--(B2) for the qualitative convergence,
supplemented by (B3) for the quantitative rate, is new.
\end{enumerate}

\section{Conclusions and Perspectives}

\label{sec:conclusions}

In this paper we have developed a complete analytical and numerical theory
of the generalized Abel equation of arbitrary polynomial degree $n\ge 1$ in
the non-autonomous setting on the unbounded interval $[x_0,\infty)$. The
central result, the Asymptotic Plateau Theorem~\ref{thm:plateau},
establishes that under the structural hypotheses (A1)--(A5) and the
asymptotic hypotheses (B1)--(B2), the solution issued from $y(x_0)=0$
converges to the finite positive limit $L:=\lim_{x\to\infty}E(x)$ of the
moving equilibrium branch. The convergence is quantified by the explicit
rate of Theorem~\ref{thm:rate}, valid under the supplementary integrability
hypothesis (B3).

The theory is constructive: the hypotheses are verifiable on concrete
examples (as illustrated by the three cubic case studies of Section~\ref%
{sec:cases}, which cover the autonomous, the algebraically-convergent
non-autonomous, and the exponentially-convergent non-autonomous regimes),
and the proofs translate directly into a stable, $L$-stable, high-order
numerical scheme based on the Radau IIA family (Section~\ref{sec:numeric}).
The Python implementation of Appendix~\ref{appendix} reproduces every figure
and every table reported in the paper. The economic application of Section~%
\ref{sec:economics} derives, within a generalized Merton model with
state-dependent volatility, a cubic Abel equation for the long-maturity
state profile of the credit spread. Theorem~\ref{thm:plateau} then provides
a rigorous mechanism for a strictly positive spread-profile plateau, a
phenomenon consistent with the empirical persistence of long-end credit
spreads once the model is calibrated in market units.

Several directions of future research naturally emerge.

\begin{enumerate}
\item \emph{Systems.} The extension of the framework to \emph{systems} of
coupled generalized Abel equations would cover multi-factor
stochastic-control problems with several state variables.

\item \emph{Stochastic coefficients.} The treatment of \emph{random} Abel
equations, in which the coefficients $a_k$ are themselves stochastic
processes adapted to a reference filtration, has potential applications to
model-uncertainty pricing in credit-risk theory.

\item \emph{Singular perturbations.} The behaviour of the equation under the
singular limit $a_n\to 0$ (corresponding to a degenerate leading term)
requires asymptotic-matching techniques that lie beyond the scope of the
present work.

\item \emph{Higher-degree double-root cases.} A complete treatment of cases
where the asymptotic equilibrium is a double root (such as the original
Case~3 of the literature, leading to algebraic rather than exponential
convergence) remains open; preliminary investigations suggest that a
centre-manifold analysis is required.
\end{enumerate}

\section*{Disclosure statement}

The author declares that he has no conflict of interest.

\section*{Data availability statement}

The Python code used for the numerical experiments is provided in full in
Appendix~\ref{appendix}. No external datasets were used.

\section*{Notes on contributor(s)}

The author is solely responsible for the conception, analysis, numerical
implementation, and writing of this manuscript.

\section*{Acknowledgements}

The author thanks the developers of open-source mathematical-software
ecosystems whose libraries (NumPy, SciPy, and Matplotlib) were used to
produce the numerical validations. The core ideas, structural formulations, and numerical simulations presented in this article were developed with the invaluable assistance of free AI models.

\appendix

\section{Python Source Code}

\label{appendix}

The official reproducibility material is the self-contained Python script
\url{https://github.com/coveidragos/Python_Code_Abel/blob/main/abel_plateau_reproducible.py}. It reproduces every figure and every
numerical entry reported in Sections~\ref{sec:cases} and \ref{sec:economics}.
The script depends only on \texttt{numpy}, \texttt{scipy}, and
\texttt{matplotlib}; it is non-interactive and can be run with a single
command.

\paragraph{Reproducibility.}

Running the script with Python 3.11, NumPy 1.26, SciPy 1.11, and Matplotlib
3.8 produces the legacy figures \texttt{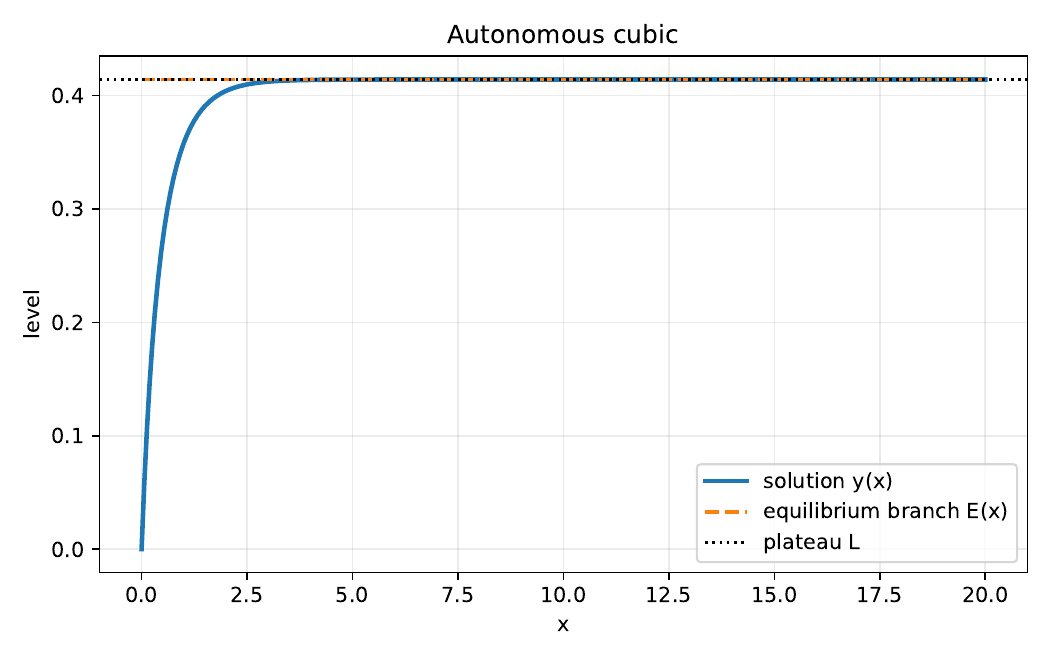}, \texttt{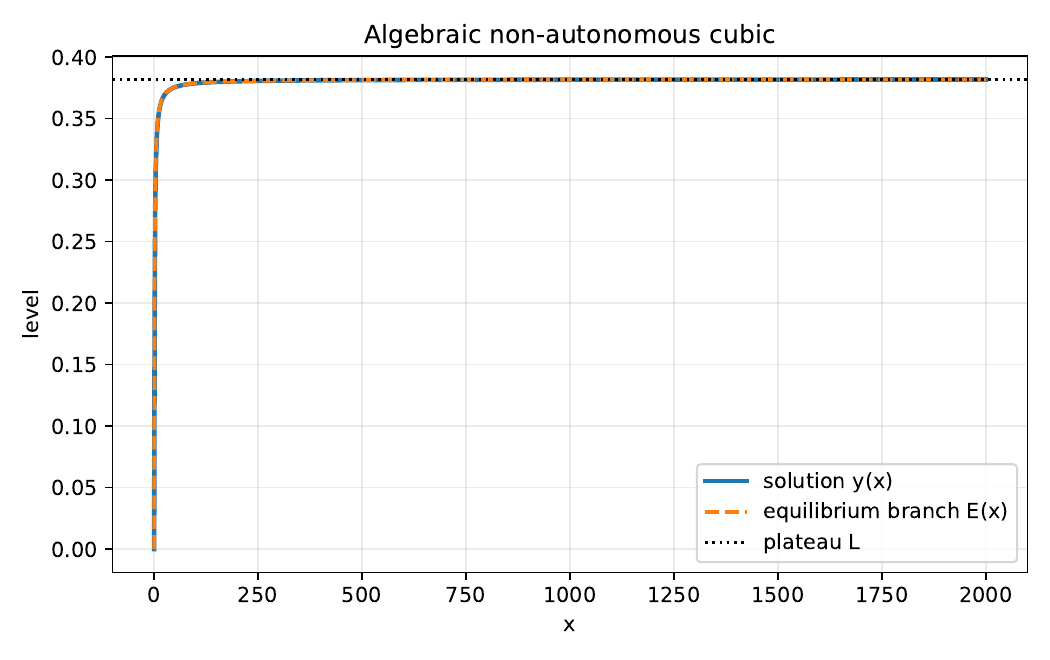},
\texttt{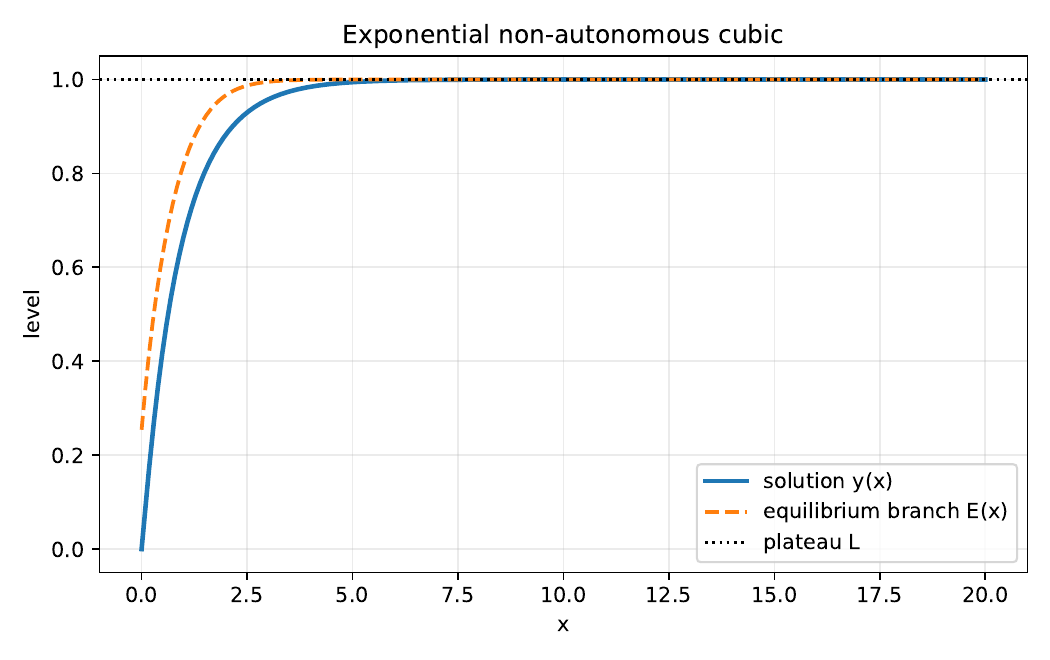}, the extended diagnostics
\texttt{case1\_solution.pdf}--\texttt{case3\_sign\_check.pdf}, and the
Credit-Risk figure \texttt{credit\_risk\_profile.pdf}. The console output
reports the plateau errors, trapping residuals, sign margins, damping checks,
and the basis-point conversion used in the financial application.

\end{document}